\xpatchcmd{\@thm}{\thm@headpunct{.}}{\thm@headpunct{}}{}{}
\newcommand{\norm}[1]{\left\lVert#1\right\rVert}
\newtheorem{theorem}{Theorem}
\newtheorem{definition}{Definition}
\newtheorem{lemma}{Lemma}
\newtheorem{assumption}{Assumption}
\newtheorem{notation}{Notation}
\newtheorem{remark}{Remark}
\newtheorem{proposition}{Proposition}
\newtheorem{corollary}{Corollary}
\theoremstyle{definition}
\begin{document}
	
\title{A splitting method for SDEs with locally Lipschitz drift:\\Illustration on the FitzHugh-Nagumo model}
\author{Evelyn Buckwar\footnotemark[1]\thanks{Institute of Stochastics, Johannes Kepler University Linz (evelyn.buckwar@jku.at,irene.tubikanec@jku.at)} \footnotemark[2]\thanks{Centre for Mathematical Sciences, Lund University},  Adeline Samson\footnotemark[3]\thanks{Laboratoire Jean Kuntzmann, University Grenoble Alpes (Adeline.Leclercq-Samson@univ-grenoble-alpes.fr)}, Massimiliano Tamborrino\footnotemark[4]\thanks{Department of Statistics, University of Warwick (massimiliano.tamborrino@warwick.ac.uk)}, Irene Tubikanec\footnotemark[1]\\}
\date{}
\maketitle	
	
\thispagestyle{empty}

\section*{Abstract}

In this article, we construct and analyse an explicit numerical splitting method for a class of semi-linear stochastic differential equations (SDEs) with additive noise, where the drift is allowed to grow polynomially and satisfies a global one-sided Lipschitz condition. The method is proved to be mean-square convergent of order $1$ and to preserve important structural properties of the SDE. First, it is hypoelliptic in every iteration step. Second, it is geometrically ergodic and has an asymptotically bounded second moment. Third, it preserves oscillatory dynamics, such as amplitudes, frequencies and phases of oscillations, even for large time steps. Our results are illustrated on the stochastic FitzHugh-Nagumo model and compared with known mean-square convergent tamed/truncated variants of the Euler-Maruyama method. The capability of the proposed splitting method to preserve the aforementioned properties may make it applicable within different statistical inference procedures. In contrast, known Euler-Maruyama type methods commonly fail in preserving such properties, yielding ill-conditioned likelihood-based estimation tools or computationally infeasible simulation-based inference algorithms.

\subsubsection*{Keywords} Stochastic differential equations, Locally Lipschitz drift, Hypoellipticity, Ergodicity, FitzHugh-Nagumo model, Splitting methods, Mean-square convergence

\subsubsection*{AMS subject classifications} 60H10, 60H35, 65C20, 65C30

\subsubsection*{Acknowledgements} 

E.B. was supported by the LCM -- K2 Center within the framework of the Austrian COMET-K2 program. A.S. was supported by MIAI@Grenoble Alpes, (ANR-19-P3IA-0003). E.B., M.T. and I.T. were supported by the Austrian Science Fund (FWF): W1214-N15, project DK14. All authors were supported by the Austrian Exchange Service (OeAD), bilateral project FR 03/2017.

\section{Introduction}
\vspace{-0.2cm}

The aim of this article is to construct and analyse a splitting method for semi-linear stochastic differential equations (SDEs) of additive noise type
\begin{equation}\label{eq:semi_linear_SDE}
	dX(t)=F(X(t)) dt + \Sigma dW(t)\, := \, \bigl[ AX(t) + N(X(t)) \bigr]dt +\Sigma dW(t), \quad X(0)=X_0,  
\end{equation}
where the diffusion matrix $\Sigma$ may be degenerate and the drift $F$ satisfies a global one-sided Lipschitz condition and is allowed to grow polynomially. Coefficients with these properties appear in many applications \cite{Hutzenthaler2012}, ranging from physics \cite{Mattingly2002,Milstein2007} over population growth problems \cite{Hutzenthaler2007,Khasminskii2011} to neuroscience \cite{FitzHugh1961,Hodgkin1952,Nagumo1962} and others. 
As an illustrative equation from this class of SDEs, we discuss the stochastic FitzHugh-Nagumo (FHN) model \cite{Berglund2012,Bonaccorsi2008,Leon2018}, a well-known neuronal model describing the generation of spikes of single neurons at the intracellular level. This model is given by the $2$-dimensional SDE
\begin{equation}\label{FHN}
	d \underbrace{\begin{pmatrix}
			V(t) \\
			U(t) 
	\end{pmatrix}}_{:=X(t)}
	=
	\underbrace{\begin{pmatrix}
			\frac{1}{\epsilon}\Bigl(V(t)-V^3(t)-U(t)\Bigr) \\
			\gamma V(t)-U(t)+\beta
	\end{pmatrix}}_\text{$:=F(X(t))$} dt \ + \
	\underbrace{\begin{pmatrix}
			\sigma_1 & 0 \\
			0 & \sigma_2
	\end{pmatrix}}_{:=\Sigma} dW(t), 
\end{equation}
where $\sigma_1$ may be zero. The $V$-component of the system describes the evolution of the membrane voltage of the neuron and the $U$-component is a recovery variable. Our aim is to construct a numerical method for \eqref{eq:semi_linear_SDE}, and \eqref{FHN} in particular, which is easy to implement and also applicable across different disciplines in the broad field of statistical inference. This implies that the method needs to meet several requirements: 

\begin{itemize}
	\item Statistical applications require strong approximations of SDEs. Thus, we focus on the concept of \textit{mean-square convergence} \cite{Kloeden1992,Milstein2004,Tretyakov2012}. Since it was shown in \cite{Hutzenthaler2011} that the standard Euler-Maruyama method does not converge in the mean-square sense under the above assumptions on the drift $F$, the development of mean-square convergent  variants of this method has received much attention. In particular, tamed \cite{Hutzenthaler2012_2,Sabanis2013,Tretyakov2012,Zhang2017} and truncated \cite{Mao2017,Hutzenthaler2012,Mao2015,Mao2016} Euler-Maruyama methods have been proposed. They all aim to control the unbounded growth arising from the non-globally Lipschitz drift by enforcing a rescaling modification to the drift and/or diffusion coefficients. 
	\item Simulation-based statistical methods require to generate paths of SDEs as computationally efficient as possible, see, e.g., \cite{Buckwar2019}. Using \textit{explicit} numerical methods  is a first step to achieve sufficiently low computational cost. While the aforementioned mean-square convergent variants of the Euler-Maruyama method are explicit, they commonly fail in preserving important structural properties of the SDE. 
	The major key to computational efficiency, however, is to construct explicit methods which are capable to preserve the underlying properties  for time steps as large as possible. This leads to the next point.
	\item An important issue in the field of (stochastic) numerical analysis is the \textit{preservation of structural properties} of the considered SDE by the numerical method used to approximate~it. Geometric Numerical Integration is a well-established framework in this context \cite{Hairer2006}. Here, we discuss the preservation of hypoellipticity, geometric ergodicity and oscillatory dynamics such as amplitudes, frequencies and phases of oscillations:
	\begin{itemize}
		\item The diffusion matrix $\Sigma$ of SDE \eqref{eq:semi_linear_SDE} may be of full rank or degenerate, where in the latter case the SDE may be \textit{hypoelliptic}, depending on the drift $F$. The case of degenerate noise naturally occurs in many applications \cite{Ableidinger2017,Ditlevsen2019,Leimkuhler2015,Leon2018,Mattingly2002,Milstein2007}, with the hypoelliptic property ensuring that the solution of the SDE admits a smooth transition density \cite{Nualart1995}. This means that the noise is propagated through the whole system via the drift of the SDE, even though it does not directly act on all components. In many inference approaches using discrete approximations of SDEs, it is necessary that a discrete analogue of the hypoelliptic property holds at each iteration step. In particular, considering a discretised time interval with equidistant time steps $\Delta=t_i-t_{i-1}$, the distribution of the numerical solution $\widetilde{X}(t_i)$ of \eqref{eq:semi_linear_SDE} at time $t_i$ given the previous value $\widetilde{X}(t_{i-1})$ must admit a smooth density, a property that we term $1$-step hypoellipticity. It is known that Euler-Maruyama type methods do not satisfy this if the SDE is not elliptic but only hypoelliptic. Thus, they yield ill conditioned likelihood-based inference methods \cite{Ditlevsen2019,Melnykova2018,Pokern2009}. Higher-order Taylor approximation methods \cite{Kloeden1992} may be $1$-step hypoelliptic \cite{Ditlevsen2019}. However, since such methods may neither be mean-square convergent in the case of superlinearly growing coefficients~\cite{Hutzenthaler2011}, nor preserve other structural properties, they may lead again to ill-posed statistical problems. 
		\item The analysis of the asymptotic behaviour of the process is of further crucial interest. In particular, if SDE \eqref{eq:semi_linear_SDE} possesses an underlying Lyapunov structure, it may be \textit{geometrically ergodic} \cite{Mattingly2002}. This property ensures that the distribution of the process converges exponentially fast to a unique limit for any starting value $X_0$, and has two important statistical implications. First, the choice of the initial  value $X_0$ is negligible since its impact on the distribution of the process decreases exponentially fast. This is relevant, especially when the process is only partially observed, since $X_0$ is usually not known. Second, there is a correspondence of ``time averages along trajectories'' and ``space averages across trajectories'' of geometrically ergodic systems, see, e.g., \cite{Ableidinger2017,daPrato1996}. This means that quantities related to the distribution of the process can be estimated from a single path simulated over a sufficiently large time horizon instead of relying on repeated simulations of trajectories. For the importance of this feature in statistical inference algorithms, we refer, e.g., to \cite{Buckwar2019}.
		Euler-Maruyama type methods may not provide these features as they tend to lose the Lyapunov structure of the SDE \cite{Ableidinger2017,Mattingly2002}. In particular, here we illustrate that they react sensitively to the initial condition $X_0$, and that they may yield poor approximations of the underlying invariant density~of~the~process.
		\item The last structural properties we are focusing on are features linked to oscillatory dynamics such as \textit{amplitudes, frequencies} and \textit{phases} of oscillations. Already in the deterministic scenario it has been observed that Euler type methods may not preserve amplitudes and frequencies of oscillations, see, e.g., \cite{Stern2020,Hairer2006}. Similar findings have been made for Euler-Maruyama type methods in the stochastic case. For example, it has been proved that the Euler-Maruyama method does not preserve the  growth rate of the second moment of linear stochastic harmonic oscillators, overshooting the amplitudes of the underlying oscillations, even for arbitrarily small time steps $\Delta$ \cite{Strommen2004}. Similar non-preserving results of oscillation amplitudes have been observed for non-linear, ergodic and higher-dimensional stochastic oscillators \cite{Ableidinger2017,Chevallier2020,Cohen2012}. Taming/truncating perturbations do not improve this behaviour. Even worse, taming perturbations may also lead to a non-preservation of frequencies of oscillations \cite{Kelly2017,Kelly2018}. This lack of amplitude and frequency preservation is also confirmed by our numerical experiments on the FHN model \eqref{FHN}. Moreover, we find that Euler-Maruyama type methods may also not preserve phases of oscillations. This poor behaviour is linked to the non-preservation of geometric ergodicity.  
	\end{itemize}
\end{itemize}

Here, we propose to apply the \textit{splitting} technique, an approach that addresses all previously listed issues. The general idea of this method is to split the SDE of interest into exactly solvable subequations, to derive their solutions, and to compose them in a suitable way. We refer to \cite{Blanes2009,Mclachlan2002} for a thorough discussion of splitting methods for ordinary differential equations (ODEs) and to \cite{Ableidinger2016,Ableidinger2017,Alamo2016,BouRabee2017,Brehier2018,Chevallier2020,Leimkuhler2015,Milstein2003,Misawa2001,Petersen1998,Shardlow2003} for articles considering extensions to SDEs. Note that it is often possible to split the differential equation under consideration into different sets of subequations, the choice of the most useful set depending on the problem to be solved. 
For the class of SDEs with additive noise, where the drift $F$ consists of a linear and a non-linear term, i.e., $F(X(t))=AX(t)+N(X(t))$, as in Equation \eqref{eq:semi_linear_SDE}, the idea is to exclude the nonlinear term $N(X(t))$ into a deterministic differential subequation and to treat the remaining linear term $AX(t)$ via a stochastic differential subequation. In this article, we exploit properties of the exact solutions of both subequations, as illustrated on the FHN model~\eqref{FHN}. When $N$ is globally Lipschitz continuous and uniformly bounded, this idea has been applied to the Jansen and Rit neural mass model in~\cite{Ableidinger2017}, and for locally Lipschitz $N$ it has been applied to the Allen-Cahn equation in \cite{Brehier2018}. 

We illustrate that splitting methods may be able to deal with mean-square convergence issues arising from superlinearly growing coefficients. In particular, in Lemma \ref{Lemma3}, we prove the boundedness of the moments of the proposed splitting method. This result is the key to establish its \textit{mean-square convergence}. In Theorem \ref{thm:convergence_FHN}, we use Tretyakov's and Zhang's ``Fundamental mean-square convergence theorem for SDEs with locally Lipschitz coefficients'' \cite{Tretyakov2012} to prove that the splitting method converges with mean-square order $1$. This is in agreement with the convergence rate of comparable known splitting methods in the globally Lipschitz scenario \cite{Ableidinger2017,Alamo2016,Milstein2003}, and with standard methods such as the Euler-Maruyama method in the case of additive noise \cite{Kloeden1992}. Moreover, we address the fact that splitting methods may also be able to tackle the problems arising from degenerate noise structures. In particular, in Theorem~\ref{thm:hypo_N_LT}, we show that the proposed splitting method is \textit{$1$-step hypoelliptic} and  yields non-degenerate multivariate normal transition distributions for any time step $\Delta$, provided that the stochastic subequation of the splitting framework is chosen to be hypoelliptic. This may be beneficial for likelihood-based inference. Furthermore, in Theorem \ref{thm:Lyapunov_discrete}, we prove that the constructed method satisfies a discrete Lyapunov condition, and is thus \textit{geometrically ergodic}, for any time step $\Delta$. This result requires an assumption on the solution of the deterministic subequation defined via $N(X(t))$ and that $\norm{e^{A\Delta}}<1$, where the matrix norm is induced by the Euclidean norm. Moreover, in Corollary \ref{cor:asym_2nd_moment}, we show that the second moment of the splitting method is asymptotically bounded by a constant which is independent of the time step size $\Delta$ and the number of time steps $i$. This result holds if, in addition, the logarithmic norm \cite{Soderlind2006,Strom1975} of the matrix $A$ is strictly negative. In the one-dimensional case, some of the involved expressions simplify such that, in Corollary \ref{cor:bounds_1dim}, we obtain a precise \textit{closed-form (asymptotic) bound of the second moment} of the proposed splitting method. This bound is illustrated on a cubic one-dimensional model problem with drift given by $F(X(t))=-X^3(t)$ \cite{Hutzenthaler2011,Mattingly2002}. In addition, we illustrate the proposed splitting method on the stochastic FHN model \eqref{FHN} and show through a variety of numerical experiments that it preserves the qualitative dynamics of neuronal spiking, in particular, \textit{amplitudes, frequencies} and \textit{phases} of the underlying oscillations even for large time steps $\Delta$. 

The article is organised as follows. In Section \ref{sec:2_FHN}, we introduce necessary mathematical preliminaries and notations, and we discuss equations of interest and relevant properties. In Section~\ref{sec:3_FHN}, we present the proposed splitting method. In Section \ref{sec:4_FHN}, we establish its  mean-square convergence. In Section \ref{sec:5_FHN}, we prove its $1$-step hypoellipticity, establish its geometric ergodicity, derive an (asymptotic) second moment bound and illustrate these results on a one-dimensional cubic model problem. 
In Section \ref{sec:6_FHN}, we apply the proposed splitting approach to the stochastic FHN model \eqref{FHN}. In Section \ref{sec:7_FHN}, we provide a variety of numerical experiments, illustrating the theoretical results and reporting comparisons with different tamed/truncated variants of the Euler-Maruyama method. Conclusions are given in Section \ref{sec:8_FHN}.

\section{Model and properties}
\label{sec:2_FHN}

Throughout, the following notations are used.
\begin{notation}
	Let $x,y \in \mathbb{R}^d$ be two generic vectors. Then $x_l$ denotes the $l$-th entry of $x$, $x^\top$ the transpose of $x$, $\norm{x}=(x_1^2+\ldots+ x_d^2)^{1/2}$ the Euclidean norm of $x$ and $(x,y)~=~x_1y_1~+~\ldots~+~x_dy_d$ the scalar product of $x$ and $y$. Further, let $A,B \in \mathbb{R}^{d\times d}$ be two generic matrices. Then $a_{lj}$ denotes the component in the $l$-th row and $j$-th column of $A$, $A^{\top}$ the transpose of $A$,  
	$0_d$ the $d$-dimensional zero vector and $\mathbb{I}_{d}$ the $d \times d$-dimensional identity matrix. 
	Moreover, we denote by $\norm{A}=\sqrt{\lambda_{\textrm{max}}(A^\top A)}$ the matrix norm which is induced by the Euclidean norm, where $\lambda_{\textrm{max}}(A)$ is the largest eigenvalue of $A$, and with $\mu(A)=\lambda_{\textrm{max}}((A+A^\top)/2)$ the real-valued logarithmic norm which results from the Euclidean norm and its induced matrix norm.
\end{notation}

Let $(\Omega,\mathcal{F},\mathbb{P})$ be a complete probability space with filtration $(\mathcal{F}(t))_{t \in [0,T]}$. Further, let $(W(t))_{t \in [0,T]}$ be a $m$-dimensional Wiener process defined on that space and adapted to $(\mathcal{F}(t))_{t \in [0,T]}$. We consider the $d$-dimensional autonomous SDE of additive noise type \eqref{eq:semi_linear_SDE}
\begin{equation*}
	dX(t)=F(X(t))dt + \Sigma dW(t):=\bigl[ AX(t) + N(X(t)) \bigr]dt + \Sigma dW(t), \quad X(0)=X_0, 
\end{equation*}
where $t \in [0,T]$, $T>0$, $A\in\mathbb{R}^{d\times d}$, $\Sigma \in \mathbb{R}^{d \times m}$, $F: \mathbb{R}^d \to \mathbb{R}^d$ and $N: \mathbb{R}^d \to \mathbb{R}^d$ are locally Lipschitz continuous. The initial value $X_0$ is an $\mathcal{F}(0)$-measurable $\mathbb{R}^d$-valued random variable which is independent of $(W(t))_{t \in [0,T]}$ and such that $\mathbb{E}\left[ \norm{X_0}^{2p} \right] <\infty$ for all $p \geq 1$.

Conditions required to ensure the existence of a unique strong solution of SDE~\eqref{eq:semi_linear_SDE}, which is regular in the sense of \cite{Khasminskii2012}, i.e., it is defined on the entire interval $[0,T]$ such that sample paths do not blow up to infinity in finite time, are, e.g., discussed in \cite{Alyushina1988,Khasminskii2012,Krylov1991,Mao2011}. Here, we follow the setting in \cite{Hutzenthaler2012_2,Kelly2017,Tretyakov2012} and suppose that the drift satisfies a global one-sided Lipschitz condition and is allowed to grow polynomially at infinity. 
It suffices to place these conditions on $N$:
\begin{assumption}\label{assum:1}
	\begin{enumerate}[label=\text{(A\arabic*)}]
		\item \label{(A1)} The function $N$ is globally one-sided Lipschitz continuous, i.e., there exists a constant $c_1>0$ such that 
		\begin{equation*}
			(x-y,N(x)-N(y))\leq c_1\norm{x-y}^2, \quad \forall \ x,y\in \mathbb{R}^d.
		\end{equation*}
		\item \label{(A2)} The function $N$ grows at most polynomially, i.e., there exist constants $c_2>0$ and $\chi\geq 1$ such that 
		\begin{equation*}
			\norm{N(x)-N(y)}^2\leq c_2(1+\norm{x}^{2\chi-2}+\norm{y}^{2\chi-2})||x-y||^2, \quad \forall \ x,y\in\mathbb{R}^d.
		\end{equation*}
	\end{enumerate}
\end{assumption}\noindent
Assumption \ref{assum:1} also ensures the finiteness of the moments of the solution of \eqref{eq:semi_linear_SDE} \cite{Higham2002,Khasminskii2012,Tretyakov2012,Zhang2017}. In particular, there exists a constant $K(T,p)>0$ such that
\begin{equation}\label{eq:bm}
	\mathbb{E}\left[ \sup_{0\leq t \leq T} \norm{X(t)}^{2p} \right] \leq K(T,p)\left(1+\mathbb{E}\left[ \norm{X_0}^{2p}\right]\right).
\end{equation}

Moreover, the process $(X(t))_{t \in [0,T]}$ is a Markov process. Denoting $\mathcal{B}(\mathbb{R}^d)$ the Borel sigma-algebra on $\mathbb{R}^d$, its transition probability is defined as
\begin{equation}\label{eq:trans_prob}
	P_{t}(\mathcal{A},x):=\mathbb{P}\left( X(t) \in \mathcal{A} | X(0)=x \right),
\end{equation}
where $\mathcal{A} \in \mathcal{B}(\mathbb{R}^d)$. This corresponds to the probability that the process reaches a Borel set $\mathcal{A} \subset \mathbb{R}^d$ at time $t$, provided that it started in $x \in \mathbb{R}^d$ at time $0< t$.

\subsection{Noise structure: ellipticity and hypoellipticity}
\label{sec:2:1}

Depending on the noise structure, two classes of models are obtained. The first class is called \textit{elliptic} and corresponds to SDEs with a non-degenerate diffusion matrix, i.e., $\Sigma\Sigma^\top$ is of full rank. 
In particular, we consider the case $d=m$ and a diagonal matrix $\Sigma=\textrm{diag}[\sigma_1,\ldots,\sigma_{d}]$ with entries $\sigma_j>0$ for $j=1,\ldots,d$.

The second class corresponds to SDEs with degenerate diffusion matrix, as it naturally occurs in many application models. Following the notion in \cite{Ditlevsen2019}, we consider $m=d-1$ and $\Sigma$ given by
\begin{equation}\label{eq:hypo_SDE}
	\Sigma:=
	\begin{pmatrix}
		0_{d-1}^{\top} \\
		\Gamma
	\end{pmatrix},
\end{equation}
where $\Gamma=\textrm{diag}[\sigma_1,\ldots,\sigma_{d-1}] \in \mathbb{R}^{(d-1)\times (d-1)}$ is a diagonal matrix with entries $\sigma_j>0$ for $\text{$j=1,\ldots,d-1$}$. The first component of the solution $(X(t))_{t \in [0,T]}$ is called \textit{smooth}, since it is not directly affected by the noise. The remaining $d-1$ components are called \textit{rough}, because the noise acts directly on them.
In this scenario, SDE \eqref{eq:semi_linear_SDE} is often \textit{hypoelliptic}.
It means that the transition probability \eqref{eq:trans_prob} admits a smooth density, even though $\Sigma\Sigma^\top$ is not of full rank. This is the case when the SDE satisfies the weak H\"ormander condition, based on the concept of Lie-brackets~\cite{Nualart1995}.
In \cite{Ditlevsen2019}, it was shown that a necessary and sufficient condition for the SDE to meet the weak H\"ormander condition is that at least one of the rough coordinates of the process $(X(t))_{t \in [0,T]}$ appears in the first component $F_1(X(t))$ of the drift, that is
\begin{equation}\label{eq:cond_hypo}
	\forall x \in \mathbb{R}^d, \ \left(\partial_r F_1(x), \sigma^j\right) \neq 0 \quad \text{for at least one} \ j=1,\ldots,d-1,
\end{equation}
where $\sigma^j$ denotes the $j$-th column vector of $\Gamma$ and $\partial_r F_1(x):=(\partial_{x_2} F_1(x),\ldots, \partial_{x_{d}} F_1(x) )^\top$ is the vector of partial derivatives of the first entry of the drift with respect to the rough components. This setting can be extended to multiple smooth coordinates by requiring that at least one of the rough coordinates enters those components of the drift, where noise does not directly act upon.

\subsection{Lyapunov structure: geometric ergodicity}
\label{sec:2:2}

Here, a particular interest lies in SDEs of type \eqref{eq:semi_linear_SDE} where the drift $F(X(t))$ satisfies the following dissipativity condition
\begin{equation}\label{eq:dissipative}
	(F(x),x)\leq \alpha -\delta \norm{x}^2, \quad \forall \ x\in \mathbb{R}^d,
\end{equation}
where $\alpha,\delta>0$. Condition \eqref{eq:dissipative} ensures that the function $L:\mathbb{R}^d\to [1,\infty)$ defined by $\text{$L(x):=1+\norm{x}^2$}$ is a Lyapunov function for \eqref{eq:semi_linear_SDE}, see \cite{Mattingly2002}. That is $L(x)\to\infty$ as $\norm{x}\to \infty$, and there exist constants $\rho,\eta>0$ such that
\begin{equation}\label{eq:Lyapunov}
	\mathcal{L}\{ L(x) \} \leq -\rho L(x) +\eta,
\end{equation}
where $\mathcal{L}$ is the generator of the SDE given by
\begin{equation*}
	\mathcal{L}\{g(x)\}=\sum\limits_{l=1}^{d} F_l(x) \frac{\partial g}{\partial x_l}(x)+\frac{1}{2}\sum\limits_{l,j=1}^{d}\left[ \Sigma\Sigma^\top \right]_{lj} \frac{\partial^2 g}{\partial x_l\partial x_j}(x),
\end{equation*}
for sufficiently smooth functions $g:\mathbb{R}^d\to\mathbb{R}$. 
The existence of a Lyapunov function satisfying~\eqref{eq:Lyapunov} is the key to establish the \textit{geometric ergodicity} of the solution of \eqref{eq:semi_linear_SDE}. This property means that the distribution of the Markov process $(X(t))_{t\in [0,T]}$ converges exponentially fast to a unique invariant distribution~$\pi$, satisfying
\begin{equation*}
	\pi(\mathcal{A})=\int\limits_{\mathbb{R}^d}P_t(\mathcal{A},x)\pi(dx), \quad \forall \ \mathcal{A} \in \mathcal{B}(\mathbb{R}^d), \ t \in [0,T].
\end{equation*}
In particular, if SDE \eqref{eq:semi_linear_SDE} is elliptic, the existence of a Lyapunov function meeting Condition~\eqref{eq:Lyapunov} suffices to establish the geometric ergodicity of $(X(t))_{t\in [0,T]}$. If SDE \eqref{eq:semi_linear_SDE} is not elliptic, the process is geometrically ergodic, if, in addition to fulfilling Condition~\eqref{eq:Lyapunov}, it is hypoelliptic and satisfies the irreducibility condition $P_t(\mathcal{A},x)>0$ for all open sets $\mathcal{A}\in \mathbb{B}(\mathbb{R}^d)$ and $x\in~\mathbb{R}^d$. The reader is referred to \cite{Mattingly2002} and the references therein for further details.

\section{Splitting method}
\label{sec:3_FHN}

Consider a discretised time interval $[0,T]$ with equidistant time steps $\Delta=t_{i}-t_{i-1} \in (0,\Delta_0]$, $\Delta_0~\in~(0,1)$, $i=1,\dots,n$, where $t_0=0$ and $t_n=T$. Throughout, we denote by
$(\widetilde{X}(t_i))_{i=0,\ldots,n}$ a numerical solution of SDE \eqref{eq:semi_linear_SDE}, 
approximating the process $(X(t))_{t \in [0,T]}$  at $t_i$, where $\widetilde{X}(0):=X_0$. 

A numerical splitting solution is obtained based on the following three steps \cite{Blanes2009,Mclachlan2002}: 
\begin{itemize}
	\item[(i)] Split the equation of interest into exactly solvable subequations, which may consist of deterministic and/or stochastic dynamical systems;
	\item[(ii)] Derive the exact solutions of these subequations; 
	\item[(iii)] Compose the derived solutions in a proper way.
\end{itemize}
In this section, the following splitting strategy is proposed for SDEs of type \eqref{eq:semi_linear_SDE}.

\paragraph*{Step (i): Choice of the subequations}
To make use of the treatable underlying stochastic linear dynamics, we propose to split Equation \eqref{eq:semi_linear_SDE} into the following two subequations
\begin{eqnarray}
	d X^{[1]}(t)&=&AX^{[1]}(t) dt + \Sigma dW(t), \quad X^{[1]}(0)=X^{[1]}_0, \quad t \in [0,T],
	\label{SDE_FHN} \\
	dX^{[2]}(t)&=&N(X^{[2]}(t))dt, \quad X^{[2]}(0)=X^{[2]}_0, \quad t \in [0,T].
	\label{ODE_FHN}
\end{eqnarray}
This splitting strategy is an extension of the method presented in \cite{Ableidinger2017}, where the authors consider a globally Lipschitz Hamiltonian type equation with uniformly bounded non-linear terms. Our method considers a more general class of coefficients $N(X(t))$, including functions which are allowed to grow polynomially at infinity according to Assumption \ref{assum:1}.

\paragraph*{Step (ii): Exact solution of the subequations}
In the following, we discuss the subequations \eqref{SDE_FHN} and \eqref{ODE_FHN}, and denote by $\varphi_t^{[k]}(X_0)$, $k=1,2$, their exact solutions (flows) at time $t$ and starting from $X_0$. The first subequation \eqref{SDE_FHN} is a linear SDE. It can be solved exactly, even when the dimension $d$ is large and independent of whether the equation has an elliptic or hypoelliptic noise structure \cite{Arnold1974,Mao2011}. In particular, the exact solution of \eqref{SDE_FHN} is given by
\begin{equation}\label{eq:SDE}
	X^{[1]}(t)=e^{At}X_0^{[1]} + \int\limits_{0}^{t} e^{A(t-s)} \Sigma \ dW(s).
\end{equation}
The It\^{o} integral in \eqref{eq:SDE} is normally distributed with mean $0_d$. Moreover, using It\^{o}'s isometry and the fact that the components of the Wiener process are independent, its $d \times d$-dimensional covariance matrix is given by
\begin{equation}\label{eq:Cov}
	C(t)=\int\limits_{0}^{t} e^{A(t-s)}\Sigma\Sigma^{\top}(e^{A(t-s)})^{\top} \ ds.
\end{equation}
Hence, paths of \eqref{SDE_FHN} can be simulated exactly at the discrete time points $t_i$. In particular,
\begin{equation}
	\varphi_{\Delta}^{[1]}(X^{[1]}(t_{i-1})):=X^{[1]}(t_{i})=e^{A\Delta}  X^{[1]}(t_{i-1}) + \xi_{i-1}, \quad i=1,\dots,n,
	\label{Exact_SDE_FHN}
\end{equation}
where the $\xi_{i-1}$ are independent and identically distributed $d$-dimensional Gaussian vectors with mean $0_d$ and covariance matrix $C(\Delta)$ given by \eqref{eq:Cov}. 

Due to \ref{(A1)} of Assumption \ref{assum:1}, the global solution of the second subequation \eqref{ODE_FHN} exists, i.e., it is defined on the entire interval $[0,T]$ such that it does not blow up to infinity in finite time \cite{Humphries2001}. At the discrete time points $t_i$, we have
\begin{equation}
	\varphi_{\Delta}^{[2]}(X^{[2]}(t_{i-1})):=X^{[2]}(t_i)=f(X^{[2]}(t_{i-1});\Delta), \quad i=1,\dots,n,
	\label{Exact_ODE_FHN}
\end{equation}
where $f:\mathbb{R}^d \to \mathbb{R}^d$ denotes the exact solution of Equation \eqref{ODE_FHN}. 

\begin{remark}
	To establish the boundedness of the moments (Lemma \ref{Lemma3}), the Lyapunov condition (Theorem \ref{thm:Lyapunov_discrete}) and the asymptotic second moment bound (Corollary \ref{cor:asym_2nd_moment} and \ref{cor:bounds_1dim}), we exploit properties of the exact solution $f$ of Equation~\eqref{ODE_FHN}. These will be illustrated on a cubic model problem and the FHN model \eqref{FHN} in Section \ref{sec:5_FHN} and Section \ref{sec:6_FHN}, respectively. However, note that some of the results presented in the following are formulated with conditions not involving f directly (see Lemma \ref{Lemma2} on the mean-square consistency and Theorem \ref{thm:hypo_N_LT} on the $1$-step hypoellipticity). Therefore, these results would also hold when a numerical method to approximate the solution of Equation \eqref{ODE_FHN} is used. We refer to \cite{Hairer2000,Humphries2001} for an exhaustive discussion of numerical methods for locally Lipschitz ODEs. 
\end{remark}

\paragraph*{Step (iii): Composition of the exact solutions}
To finally obtain a numerical solution of SDE \eqref{eq:semi_linear_SDE}, the exact solutions \eqref{Exact_SDE_FHN} and \eqref{Exact_ODE_FHN} of the subequations \eqref{SDE_FHN} and \eqref{ODE_FHN} are composed in every iteration step. In particular, we investigate the following explicit method
\begin{eqnarray}\label{SP1_FHN}
	\widetilde{X}^{\textrm{LT}}(t_i)&=&\left( \varphi_\Delta^{[1]} \circ \varphi_\Delta^{[2]} \right) \bigl(\widetilde{X}^{\textrm{LT}}(t_{i-1}) \bigr)=e^{A\Delta}f(\widetilde{X}^{\textrm{LT}}(t_{i-1});\Delta)+\xi_{i-1}, 
\end{eqnarray}
which is based on the Lie-Trotter (LT) composition approach \cite{Trotter1959}. 

Note that the matrix exponential $e^{A\Delta}$ and the covariance matrix $C(\Delta)$ required in \eqref{SP1_FHN} 
have to be precomputed only once for a given time step $\Delta$, and the normal random variables $\xi_{i-1}$, $i=1,\ldots,n$, can be obtained using a Cholesky decomposition of the covariance matrix~$C(\Delta)$. 

\section{Mean-square convergence}
\label{sec:4_FHN}

In this section, mean-square convergence of order $1$ is proved for the constructed splitting method. It has been observed in the globally Lipschitz case that splitting methods have the same convergence order as the standard Euler-Maruyama method, i.e., order $1$ in the case of additive noise, see, e.g., \cite{Ableidinger2017,Alamo2016,Milstein2003}. We extend this result to the one-sided Lipschitz case. 

Throughout this section, $K$ denotes a generic constant, which may depend on $T$, $p$ and $\Delta_0$, but is independent of $\Delta$ and $i$.

\subsection{Required background}

To establish mean-square convergence, we rely on Theorem 2.1 of \cite{Tretyakov2012}, which provides an extension of Milstein's fundamental theorem on the mean-square order of convergence for globally Lipschitz coefficients~\cite{Milstein1988} (see also Theorem 1.1 in \cite{Milstein2004}) to the considered setting specified in Assumption~\ref{assum:1}. To facilitate the illustration of our results, we recall this statement in Theorem~\ref{thm:Zang} below, after defining the required ingredients of mean-square consistency and boundedness of moments.

Let $X_{t_{i-1},x}(t_i)$ denote the true solution at time $t_i$ starting from $x$ at time $t_{i-1}$, i.e., $X(t_{i-1})=x$, and $\widetilde{X}_{t_{i-1},x}(t_i)$ the one-step approximation used to construct a numerical solution $\widetilde{X}(t_i)$. In particular, the one-step approximation of the numerical method discussed in the previous section is defined by \eqref{SP1_FHN}, where $\widetilde{X}^{\textrm{LT}}(t_{i-1})$ is replaced by $x$.
\begin{definition}\label{eq:ms_consistency}
	The one-step approximation $\widetilde{X}_{t_{i-1},x}(t_i)$ of a numerical solution $\widetilde{X}(t_i)$ of SDE \eqref{eq:semi_linear_SDE} is mean-square consistent of order $q_2-1/2$, if for some $p\geq 1$, there exist $\alpha\geq 1$, $\Delta_0>0$ and $K>0$ such that for arbitrary $t_i$, $i=1,\ldots,n$, $x \in \mathbb{R}^d$, and all $\Delta\in(0,\Delta_0]$, it holds that
	\begin{eqnarray*}\hspace{-0.5cm}
		\nonumber\norm{\mathbb{E}\left[X_{t_{i-1},x}(t_i)-\widetilde X_{t_{i-1},x}(t_i)\right]} & \leq  K\left( 1+\norm{x}^{2\alpha} \right)^{1/2} \Delta^{q_1}, \\ \left(\mathbb{E}\left[\norm{X_{t_{i-1},x}(t_i)-\widetilde X_{t_{i-1},x}(t_i)}^{2p}\right]\right)^{1/(2p)}&\leq  K\left( 1+\norm{x}^{2\alpha p} \right)^{1/(2p)} \Delta^{q_2},
	\end{eqnarray*}
	with $q_2 \geq 1/2$ and $q_1\geq q_2+1/2$.
\end{definition}\noindent
Besides mean-square consistency, the boundedness of the moments of the numerical solution has to be proved. In the globally Lipschitz case, this is guaranteed by the linear growth bounds of the coefficients.
\begin{definition}\label{eq:bm_num}
	A numerical solution $\widetilde{X}(t_i)$ of SDE \eqref{eq:semi_linear_SDE} has bounded moments, if for any $p \geq 1$, there exist $\Delta_0>0$ and $K>0$ such that for all $\Delta\in(0,\Delta_0]$ and $i=0,\ldots,n$, it holds that
	\begin{equation*}
		\mathbb{E}\left[ \norm{\widetilde X(t_i)}^{2p}\right] \leq K\left(1+ \mathbb{E}\left[\norm{X_0}^{2p}\right]\right).
	\end{equation*}
\end{definition}\noindent
Based on the above defined ingredients, the following theorem guarantees mean-square convergence.
\begin{theorem}[Theorem 2.1 in Tretyakov and Zhang (2013) \cite{Tretyakov2012}]\label{thm:Zang} 
	Let $\widetilde{X}(t_i)$ denote a numerical solution of SDE \eqref{eq:semi_linear_SDE} at time $t_i$ starting at $X_0$, constructed using the one-step approximation $\widetilde{X}_{t_{i-1},x}(t_i)$. Further, let Assumption \ref{assum:1} hold. If 
	\begin{itemize} 
		\item[(i)] The one-step approximation $\widetilde{X}_{t_{i-1},x}(t_i)$ is mean-square consistent of order $q_2-1/2$ in the sense of Definition \ref{eq:ms_consistency}.
		\item[(ii)] The numerical method $\widetilde{X}(t_i)$ has bounded moments in the sense of Definition \ref{eq:bm_num}.
	\end{itemize}
	Then the numerical method $\widetilde{X}(t_i)$ is mean-square convergent of order $q_2-1/2$, i.e., for any $n$ and $i=0,\ldots,n$, the following inequality holds:
	\begin{equation*}
		\left( \mathbb{E}\left[ \norm{ X(t_i)-\widetilde{X}(t_i) }^{2p} \right]\right)^{1/(2p)} \leq K \left( 1+ \mathbb{E}\left[ \norm{X_0}^{2pc} \right] \right)^{1/(2p)}\Delta^{q_2-1/2},
	\end{equation*}
	where $K>0$ and $c\geq 1$.
\end{theorem}\noindent

\vspace{-0.2cm}
\subsection{Mean-square convergence of the splitting method}

In the following, we prove the required Conditions $(i)$ and $(ii)$ of Theorem \ref{thm:Zang} for the constructed splitting method.

Condition $(i)$ can be proved in a similar fashion as Lemma 2.1 in \cite{Milstein2003} (globally Lipschitz case) and Lemma 3.2 in \cite{Tretyakov2012,Zhang2017} (locally Lipschitz case). These proofs rely on the mean-square consistency of the Euler-Maruyama method, which is given by
\begin{equation}\label{EM_FHN}
	\widetilde{X}^{\textrm{EM}}(t_i)=\widetilde{X}^{\textrm{EM}}(t_{i-1})
	+ F(\widetilde{X}^{\textrm{EM}}(t_{i-1}))\Delta + \Sigma \sqrt{\Delta} \psi_{i-1},
\end{equation}
where the $\psi_{i-1} \sim \mathcal{N}(0_m,\mathbb{I}_m)$, $i=1,\ldots,n$, are independent and identically distributed $m$-dimensional standard Gaussian vectors \cite{Kloeden1992,Milstein2004}.
\begin{lemma}[Mean-square consistency]\label{Lemma2} 
	Let $\widetilde{X}^{\textrm{LT}}_{t_{i-1},x}(t_i)$ be the one-step approximation of the splitting method defined through \eqref{SP1_FHN} and let Assumption \ref{assum:1} hold. Further, assume that the drift $F(x)$ has continuous first and second order derivatives in $x$ which satisfy a polynomial growth condition of the form \ref{(A2)}. Then $\widetilde{X}^{\textrm{LT}}_{t_{i-1},x}(t_i)$ is mean-square consistent of order $1$ in the sense of Definition~\ref{eq:ms_consistency}. In particular, $q_1=2$ and $q_2=3/2$.
\end{lemma}\noindent

\begin{proof}
	Consider first the one-step approximation 
	\begin{equation}\label{eq:EM_one_step}
		\widetilde{X}^{\textrm{EM}}_{t_{i-1},x}(t_i)=x+F(x)\Delta+\Sigma\sqrt{\Delta}\psi_{i-1}
	\end{equation}
	of the Euler-Maruyama method \eqref{EM_FHN} applied to SDE \eqref{eq:semi_linear_SDE}. 
	Since we consider the case of additive noise and \ref{(A2)} holds for the drift $F(x)=Ax+N(x)$ by assumption of Lemma \ref{Lemma2}, it follows from the proof of Lemma 3.2 in \cite{Zhang2017} that \eqref{eq:EM_one_step} 
	satisfies Definition \ref{eq:ms_consistency} with $q_1=2$, $q_2=3/2$. Thus, it suffices to compare the splitting and Euler-Maruyama methods.
	
	Since the drift of the stochastic subequation \eqref{SDE_FHN} of the splitting framework grows linearly, using again Lemma 3.2 in \cite{Zhang2017} (see also Section 1.1.5 in \cite{Milstein2004}), its solution can be expressed as 
	\begin{equation}\label{SDE_remainder}
		\varphi_{\Delta}^{[1]}(x)=x+ Ax \Delta + \Sigma \sqrt{\Delta} \psi_{i-1}+r_s(x,\Delta),
	\end{equation} 
	where $r_s(x,\Delta)$ satisfies the inequalities of Definition \ref{eq:ms_consistency} for $\alpha=1$, $q_1=2$ and $q_2=3/2$. In particular, we have that	
	\begin{equation}\label{eq:rs}
		\begin{aligned}
			\norm{\mathbb{E}\left[ r_s(x,\Delta) \right]} &\leq  K\left( 1+\norm{x}^{2} \right)^{1/2} \Delta^{2}, \\
			\left(\mathbb{E}\left[\norm{ r_s(x,\Delta) }^{2p}\right]\right)^{1/(2p)} &\leq  K\left( 1+\norm{x}^{2 p} \right)^{1/(2p)} \Delta^{3/2}. 
		\end{aligned}
	\end{equation}	
	Similarly, since $N(x)$ satisfies \ref{(A2)}, the solution of the deterministic subequation~\eqref{ODE_FHN} of the splitting framework can be expressed as 
	\begin{equation}\label{ODE_remainder_2}
		\varphi_{\Delta}^{[2]}(x)=x+ N(x)\Delta+r_d(x,\Delta),
	\end{equation}
	where 
	\begin{equation}\label{eq:rd}
		\norm{ r_d(x,\Delta) } \leq  K\left( 1+\norm{x}^{2\alpha} \right)^{1/2} \Delta^{2},
	\end{equation}
	for some $\alpha \geq 1$.
	The one-step approximation of the Lie-Trotter splitting method is then obtained by composing the above expressions \eqref{SDE_remainder} and \eqref{ODE_remainder_2}, yielding
	\begin{eqnarray*}
		\widetilde{X}^{\textrm{LT}}_{t_{i-1},x}(t_i)&=&(\varphi_{\Delta}^{[1]} \ \circ \ \varphi_{\Delta}^{[2]})(x)=x+N(x)\Delta + r_d(x,\Delta)  + Ax\Delta + AN(x)\Delta^2+Ar_d(x,\Delta)\Delta \\&& \hspace{0.5cm} +\Sigma\sqrt{\Delta}\psi_{i-1}+r_s\bigl(x+N(x)+r_d(x,\Delta),\Delta\bigr).
	\end{eqnarray*}
	Thus, the difference between the splitting and Euler-Maruyama methods becomes
	\begin{eqnarray*}
		r^{\textrm{LT}}(x,\Delta)&:=&\widetilde{X}^{\textrm{LT}}_{t_{i-1},x}(t_i)-\widetilde{X}^{\textrm{EM}}_{t_{i-1},x}(t_i)\\ &=&r_d(x,\Delta)+AN(x)\Delta^2+A r_d(x,\Delta) \Delta+r_s\bigl(x+N(x)+r_d(x,\Delta),\Delta\bigr).
	\end{eqnarray*}
	Using \ref{(A2)} and the inequalities \eqref{eq:rs} and \eqref{eq:rd}, it follows that $r^{\textrm{LT}}(x,\Delta)$ also satisfies the inequalities in Definition \ref{eq:ms_consistency} for $q_1=2$, $q_2=3/2$ and some $\alpha\geq 1$. This concludes the proof.
\end{proof}

Now, we establish the boundedness of the moments of the splitting method. Intuitively, this is guaranteed by the use of the global exact solution of the locally Lipschitz ODE \eqref{ODE_FHN}, which is defined on the entire interval $[0,T]$ without any explosion occuring in finite time. Thus, the iterative composition of this function with the solution of the linear SDE via the Lie-Trotter method \eqref{SP1_FHN} does not cause an explosion of the moments in finite time either. The formal proof of this result, provided in Lemma \ref{Lemma3}, is done in the spirit of the proof of Proposition~3 in \cite{Brehier2018}. 

\begin{lemma}[Boundedness of moments]\label{Lemma3} 
	Let $\widetilde{X}^{\textrm{LT}}(t_i)$ be the splitting method defined through \eqref{SP1_FHN} and let Assumption \ref{assum:1} hold.
	Then $\widetilde{X}^{\textrm{LT}}(t_i)$ is mean-square bounded in the sense of Definition~\ref{eq:bm_num}. 
\end{lemma}\noindent

\begin{proof}
	Consider the linear SDE
	\begin{equation*}
		dZ(t)=AZ(t)dt+\Sigma dW(t), \quad Z(0)=Z_0=0_d.
	\end{equation*}
	Its exact solution is given by
	\begin{equation*}
		Z(t)=\int\limits_{0}^{t} e^{A(t-s)} \Sigma dW(s),
	\end{equation*}
	where $Z(t)$ is normally distributed with mean vector $0_d$ and covariance matrix $C(t)$ as defined in \eqref{eq:Cov}. Consequently, the moments of $Z(t)$ are bounded, i.e., for any $p\geq 1$ there exists $K_Z(T,p)>0$ such that
	\begin{equation}\label{eq:moments_Z}
		\mathbb{E}\left[ \sup\limits_{0\leq t\leq T} \norm{Z(t)}^{2p} \right] \leq K_Z(T,p).
	\end{equation}
	
	Now, define the process $R(t_i):=\widetilde{X}^{\textrm{LT}}(t_i)-Z(t_i)$. It suffices to prove the boundedness of the moments of $R(t_i)$. Note that in a discretised regime we have that $Z(t_i)=e^{A\Delta}Z(t_{i-1})+\xi_{i-1}$. Thus, 
	\begin{eqnarray*}
		\norm{R(t_i)}&=&\norm{ e^{A\Delta} \left( f(\widetilde{X}^{\textrm{LT}}(t_{i-1});\Delta) - Z(t_{i-1}) \right) } \\
		&=& \norm{ e^{A\Delta}  \Bigl( f(R(t_{i-1})+Z(t_{i-1});\Delta) -f(Z(t_{i-1});\Delta) + f(Z(t_{i-1});\Delta)  -Z(t_{i-1})\Bigr) }.
	\end{eqnarray*}
	Using that $\norm{e^{A\Delta}x}\leq\norm{e^{A\Delta}}\norm{x}\leq e^{\mu(A)\Delta}\norm{x}$ for all $x \in \mathbb{R}^d$, we obtain
	\begin{eqnarray*}
		\norm{R(t_i)}&\leq& 
		e^{\mu(A)\Delta} \norm{  f(R(t_{i-1})+Z(t_{i-1});\Delta) -f(Z(t_{i-1});\Delta) } \\ && \hspace{0.5cm} + e^{\mu(A)\Delta} \norm{  f(Z(t_{i-1});\Delta)  -Z(t_{i-1}) }.
	\end{eqnarray*}	
	
	Since the function $N:\mathbb{R}^d\to\mathbb{R}^d$ satisfies \ref{(A1)}, using the continuous Gronwall Lemma, the function $f:\mathbb{R}^d\to\mathbb{R}^d$ fulfils the following global Lipschitz condition
	\begin{equation*}
		\norm{f(x;\Delta)-f(y;\Delta)} \leq e^{c_1 \Delta}\norm{x-y}, \quad \forall \ x,y \in \mathbb{R}^d,
	\end{equation*}
	where the constant $c_1$ is the same as in Assumption \ref{(A1)}, see, e.g., Theorem 1.2.17 in \cite{Humphries2001}.	
	Moreover, using the Taylor series expansion \eqref{ODE_remainder_2} $f(x;\Delta)=x+N(x)\Delta+r_d(x,\Delta)$, and applying \ref{(A2)} and \eqref{eq:rd}, we obtain
	\begin{equation*}
		\norm{f(x;\Delta)-x} = \norm{ \Delta  N(x) + r_d(x,\Delta) } \leq \Delta \norm{N(x)} + \norm{r_d(x,\Delta)} \leq  \bar{c} (1+\norm{x}^{\hat{c}}) \Delta,
	\end{equation*}
	where $\bar{c}$ and $\hat{c}$ are positive constants. Thus, defining $\tilde{c}:= \max\{|\mu(A)|,c_1\}>0$, we get that
	\begin{eqnarray*}
		\norm{R(t_i)} &\leq& 
		e^{\tilde{c}\Delta} \norm{ R(t_{i-1})} +  e^{\tilde{c}\Delta}  \Delta \bar{c}  \bigl( 1+\norm{ Z(t_{i-1})}^{\hat{c}} \bigr).
	\end{eqnarray*}\noindent
	Now, we can perform back iteration, obtaining
	\begin{eqnarray*}
		\norm{R(t_i)} &\leq& e^{\tilde{c}t_i} \norm{ R_0} + \bar{c} \Delta \sum_{k=1}^{i}e^{\tilde{c}k\Delta} \Bigl( 1+\norm{ Z(t_{i-k}) }^{\hat{c}} \Bigr) \\
		&\leq& e^{\tilde{c}T} \norm{ X_0} + \bar{c}  \Bigl( 1+ \sup_{0\leq l \leq i-1}  \norm{ Z(t_{l}) }^{\hat{c}} \Bigr)   \Delta \sum_{k=1}^{i}e^{\tilde{c}k\Delta}, 
	\end{eqnarray*}
	where we used that $R_0=X_0$, since $Z_0=0_d$. Using that
	\begin{equation*}\label{eq:geom1}
		\Delta\sum_{k=1}^{i}e^{\tilde{c} k\Delta}=(e^{\tilde{c} t_i}-1)\frac{\Delta e^{\tilde{c}\Delta}}{e^{\tilde{c}\Delta}-1} \leq (e^{\tilde{c}T}-1)\frac{\Delta_0e^{\tilde{c}\Delta_0}}{e^{\tilde{c}\Delta_0}-1}, \quad \forall \ \Delta \in (0,\Delta_0],
	\end{equation*}
	we get that
	\begin{eqnarray*}
		\norm{R(t_i)} &\leq& e^{\tilde{c}T} \norm{ X_0} + \bar{c} (e^{\tilde{c}T}-1) \frac{\Delta_0e^{\tilde{c}\Delta_0}}{e^{\tilde{c}\Delta_0}-1} \Bigl( 1+\sup_{0\leq l \leq i-1}   \norm{ Z(t_{l}) }^{\hat{c}}  \Bigr). 
	\end{eqnarray*}
	Thus, there exists a constant ${K}(T,\Delta_0)>0$ such that
	\begin{equation*}
		\norm{R(t_i)}\leq {K}(T,\Delta_0)\Bigl( 1+\norm{X_0} + \sup_{0\leq l \leq i-1}  \norm{ Z(t_{l}) }^{\hat{c}}   \Bigr).
	\end{equation*}
	
	Considering the $2p$-th moments and using \eqref{eq:moments_Z} concludes the proof.
\end{proof}

Based on the above results, we establish the mean-square convergence of the splitting method in the following theorem.
\begin{theorem}[Mean-square convergence]\label{thm:convergence_FHN}
	Let $\widetilde{X}^{\textrm{LT}}(t_i)$ be the splitting method defined through \eqref{SP1_FHN} and let the assumptions of Theorem \ref{thm:Zang}, Lemma \ref{Lemma2} and Lemma \ref{Lemma3} hold. Then $\widetilde{X}^{\textrm{LT}}(t_i)$ is mean-square convergent of order $1$.
\end{theorem}
\begin{proof}
	The result is a direct consequence of Theorem \ref{thm:Zang}, Lemma \ref{Lemma2} and Lemma \ref{Lemma3}.
\end{proof}\noindent

Note that, in contrast to ODEs \cite{Hairer2006}, the mean-square convergence order of splitting methods for SDEs cannot be increased by using compositions based on fractional steps. Indeed, to achieve this in the stochastic scenario, higher-order stochastic integrals would be required~\cite{Milstein2003}. Thus, the splitting method 
\begin{eqnarray} 
	\label{SP3_FHN}
	\widetilde{X}^{\textrm{S}}(t_i)&=&\left( \varphi_{\Delta/2}^{[2]} \circ \varphi_{\Delta}^{[1]} \circ \varphi_{\Delta/2}^{[2]} \right)\bigl(\widetilde{X}^{\textrm{S}}(t_{i-1}) \bigr)=f\left(e^{A\Delta}f\bigl(\widetilde{X}^{\textrm{S}}(t_{i-1});\Delta/2\bigr) + \xi_{i-1};\Delta/2 \right),
\end{eqnarray}
which is based on the Strang (S) composition approach \cite{Strang1968}, is expected to also have mean-square order~$1$. Nevertheless, it has been observed that Strang methods may perform better than Lie-Trotter methods in numerical experiments, possibly due to the 
symmetry of this composition method, see, e.g., \cite{Ableidinger2017,Stern2020,Chevallier2020,Tubikanec2020}. Thus, the Strang method~\eqref{SP3_FHN} is also considered in the numerical experiments reported in Section \ref{sec:7_FHN}.

\section{Structure preservation}
\label{sec:5_FHN}

The mean-square convergence discussed in the previous section is a limit result for the time discretisation step $\Delta$ going to zero over a finite interval. This result does not carry any information about the quality of the numerical method under the use of strictly positive time steps $\Delta$, as always required when implementing any numerical method. In the following, we discuss the preservation of important structural properties, focusing on hypoellipticity and ergodicity. 

\subsection{Preservation of noise structure and $1$-step hypoellipticity}

To obtain a discrete analogue of the transition probability \eqref{eq:trans_prob} introduced in Section \ref{sec:2_FHN}, we define the $k$-step transition probability of a numerical solution $\widetilde{X}(t_i)$ of SDE \eqref{eq:semi_linear_SDE} as follows
\begin{equation}\label{eq:trans_prob_k}
	\widetilde{P}_{t_k}(\mathcal{A},x):=\mathbb{P}(\widetilde{X}(t_k)\in\mathcal{A}|\widetilde{X}(0)=x),
\end{equation}
where $\mathcal{A}\in\mathcal{B}(\mathbb{R}^d)$ and $x\in \mathbb{R}^d$. Now, assume that SDE \eqref{eq:semi_linear_SDE} is hypoelliptic, i.e., its transition probability \eqref{eq:trans_prob} has a smooth density even though $\Sigma\Sigma^\top$ is not of full rank, see Section \ref{sec:2:1}. We introduce a discrete version of this property in the subsequent definition.
\begin{definition}[$k$-step hypoellipticity]\label{def:k:hypo}
	Let $\widetilde{X}(t_i)$ be a numerical solution of  \eqref{eq:semi_linear_SDE} and $k\in\mathbb{N}$ be the smallest $k$ such that its transition probability \eqref{eq:trans_prob_k} has a smooth density. Then, $\widetilde{X}(t_i)$ is called $k$-step hypoelliptic.
\end{definition}\noindent
This means that the numerical method propagates the noise into the smooth component after $k$ iteration steps. The preservation of this property is not an issue when using the numerical method to simulate paths of the SDE over a large enough time horizon, as standard methods usually satisfy it for some $k$. For example, the Euler-Maruyama  method has been observed to be $2$-step hypoelliptic, see, e.g., Corollary 7.4 in \cite{Mattingly2002}. 

However, the case $k=1$, where we also use the notation
\begin{equation}\label{eq:trans_prob_1}
	\widetilde{P}_{\Delta}(\mathcal{A},x):=\mathbb{P}(\widetilde{X}(t_i)\in\mathcal{A}|\widetilde{X}(t_{i-1})=x),
\end{equation}
is of crucial relevance when using the numerical method within statistical applications. For example, in the field of likelihood-based parameter estimation, explicit numerical methods are used to approximate transition densities \cite{Ditlevsen2019,Melnykova2018,Pokern2009}. In this regard, a particular interest lies in situations where \eqref{eq:trans_prob_1} corresponds to a non-degenerate multivariate normal distribution, i.e., $\widetilde{X}(t_i)$ given $\widetilde{X}(t_{i-1})$ is normally distributed with a covariance matrix that reflects the propagation of the noise to the smooth components. This is not the case for the Euler-Maruyama method \eqref{EM_FHN}, as it yields a degenerate multivariate normal transition distribution with conditional covariance matrix given~by
\vspace{-0.2cm}\begin{eqnarray*}
	\nonumber&&\textrm{Cov}(\widetilde{X}^{\textrm{EM}}(t_i)|\widetilde{X}^{\textrm{EM}}(t_{i-1}))=\Delta\Sigma\Sigma^{\top}.
\end{eqnarray*}
Note that the same degenerate covariance matrix is obtained by tamed/truncated variants of the Euler-Maruyama method (see Section \ref{sec:7:1}).

In contrast, the conditional covariance matrix of the Lie-Trotter splitting \eqref{SP1_FHN} coincides with $C(\Delta)$, as defined in \eqref{eq:Cov}. Thus, if the stochastic linear subequation \eqref{SDE_FHN} of the splitting framework is hypoelliptic, the proposed splitting method yields a non-degenerate multivariate normal transition distribution.
\begin{assumption}\label{assum:A_hypo}
	The matrix $A$ is such that SDE \eqref{SDE_FHN} is hypoelliptic.
\end{assumption}
\begin{theorem}[1-step hypoellipticity]\label{thm:hypo_N_LT}
	Let $\widetilde{X}^{\textrm{LT}}(t_i)$ be the splitting method defined through \eqref{SP1_FHN} and let Assumption~\ref{assum:A_hypo} hold. Then, $\widetilde{X}^{\textrm{LT}}(t_i)$ is $1$-step hypoelliptic according to Definition \ref{def:k:hypo}. Moreover, $\widetilde{X}^{\textrm{LT}}(t_i)$ given $\widetilde{X}^{\textrm{LT}}(t_{i-1})$ admits a non-degenerate  normal distribution with mean vector and covariance matrix given by 
	\begin{equation*}
		\mathbb{E}\left[\widetilde{X}^{\textrm{LT}}(t_i)|\widetilde{X}^{\textrm{LT}}(t_{i-1})\right]=e^{A\Delta}f(\widetilde{X}^{\textrm{LT}}(t_{i-1});\Delta), \quad \textrm{Cov}(\widetilde{X}^{\textrm{LT}}(t_i)|\widetilde{X}^{\textrm{LT}}(t_{i-1}))=C(\Delta),
	\end{equation*}
	respectively, where $C(\Delta)$ is defined in \eqref{eq:Cov}.
\end{theorem}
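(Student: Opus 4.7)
The plan is to note that conditioning on $\widetilde{X}^{\textrm{LT}}(t_{i-1})=x$ reduces the one-step update in \eqref{SP1_FHN} to the deterministic shift $e^{A\Delta}f(x;\Delta)$ plus the random vector $\xi_{i-1}$, and then to use the assumed hypoellipticity of the linear subequation \eqref{SDE_FHN} to conclude that $\xi_{i-1}$ is a non-degenerate Gaussian.

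First I would fix $x \in \mathbb{R}^d$ and unfold the definition \eqref{SP1_FHN} of the Lie-Trotter scheme. By Assumption \ref{(A3)}, $f(\,\cdot\,;\Delta)$ is a deterministic function, so $e^{A\Delta}f(x;\Delta)$ is a fixed vector once we condition on $\widetilde{X}^{\textrm{LT}}(t_{i-1})=x$. Since $\xi_{i-1}$ is independent of $\widetilde{X}^{\textrm{LT}}(t_{i-1})$ and, by \eqref{Exact_SDE_FHN}--\eqref{eq:Cov}, distributed as $\mathcal{N}(0_d,C(\Delta))$, an affine transformation argument immediately gives the claimed mean vector and covariance matrix and shows that the conditional law is multivariate normal.

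The key step will be to verify that $C(\Delta)$ is positive definite, which is exactly what is needed for the normal law to be non-degenerate and to admit a smooth Lebesgue density. Here I would invoke the classical equivalence, for linear SDEs with constant coefficients, between the weak H\"ormander condition and Kalman's controllability rank condition $\mathrm{rank}\bigl[\Sigma \mid A\Sigma \mid \ldots \mid A^{d-1}\Sigma\bigr]=d$; this rank condition is in turn equivalent to $C(t) = \int_0^t e^{A(t-s)}\Sigma\Sigma^\top (e^{A(t-s)})^\top\,ds$ being symmetric positive definite for every $t>0$. Since by assumption \eqref{SDE_FHN} is hypoelliptic, $C(\Delta)$ is invertible for every $\Delta>0$, and hence the density of $\mathcal{N}(e^{A\Delta}f(x;\Delta),C(\Delta))$ is smooth in the terminal variable. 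This establishes $1$-step hypoellipticity in the sense of Definition \ref{def:k:hypo}.

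The only delicate part is the equivalence between hypoellipticity of \eqref{SDE_FHN} and the nondegeneracy of $C(\Delta)$; everything else is a direct consequence of the Gaussianity of $\xi_{i-1}$ and the affine form of the Lie-Trotter update. Note that an analogous statement for the Strang scheme \eqref{SP3_FHN} does \emph{not} follow automatically, because the final outer application of $f(\,\cdot\,;\Delta/2)$ is in general nonlinear, so its conditional law is not Gaussian in closed form, which is why the theorem is stated only for the Lie-Trotter method.
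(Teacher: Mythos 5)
Your proposal is correct and follows essentially the same route as the paper's proof: read off the Gaussian conditional law with mean $e^{A\Delta}f(x;\Delta)$ and covariance $C(\Delta)$ directly from \eqref{SP1_FHN}, then use the hypoellipticity of the linear subequation \eqref{SDE_FHN} to conclude that $C(\Delta)$ is of full rank, hence the conditional normal distribution is non-degenerate and admits a smooth density. The only difference is that you spell out the mechanism (H\"ormander/Kalman rank condition equivalent to positive definiteness of $C(t)$ for $t>0$) that the paper leaves implicit.
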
 
\begin{proof}
	The fact that $\widetilde{X}^{\textrm{LT}}(t_i)$ given $\widetilde{X}^{\textrm{LT}}(t_{i-1})$ is normally distributed with the corresponding mean vector and covariance matrix is an immediate consequence of formula \eqref{SP1_FHN}, recalling that the $\xi_i$ are Gaussian random vectors with null mean and covariance matrix $C(\Delta)$. Moreover, the linear SDE~\eqref{SDE_FHN} is hypoelliptic by assumption. Thus, its solution $(X^{[1]}(t))_{t\in[0,T]}$ has conditional covariance matrix $C(t)=\textrm{Cov}(X^{[1]}(t)|X_0^{[1]})$ \eqref{eq:Cov} which is of full rank. Since the covariance matrix of the Lie-Trotter splitting equals $C(\Delta)$, this method is $1$-step hypoelliptic according to Definition~\ref{def:k:hypo}, and thus the normal distribution is non-degenerate.
\end{proof}

\begin{remark}
	Note that the $1$-step hypoellipticity of the Lie-Trotter splitting \eqref{SP1_FHN}  and the fact that this method yields a non-degenerate normal transition distribution according to Theorem~\ref{thm:hypo_N_LT} hold without requiring any conditions on the nonlinearity of SDE \eqref{eq:semi_linear_SDE}. Moreover, even though the transition distribution of the Strang splitting \eqref{SP3_FHN} is not explicitly available in general, this numerical method is expected to be $1$-step hypoelliptic too, since it also benefits from the covariance matrix $C(\Delta)$~\eqref{eq:Cov}.
\end{remark}

\subsection{Preservation of Lyapunov structure and geometric ergodicity}\label{sec:5:2_FHN}

We now assume that SDE \eqref{eq:semi_linear_SDE} is geometrically ergodic. The main task to establish the geometric ergodicity of a numerical solution of \eqref{eq:semi_linear_SDE} is to prove a discrete analogue of the Lyapunov condition~\eqref{eq:Lyapunov} introduced in Section \ref{sec:2:2}.
\begin{definition}[Discrete Lyapunov condition]\label{def:Lyapunov_discrete}
	Let $L$ be a Lyapunov function for SDE~\eqref{eq:semi_linear_SDE}. A numerical solution $\widetilde{X}(t_i)$ of \eqref{eq:semi_linear_SDE} satisfies the discrete Lyapunov condition if there exist $\tilde{\rho}\in (0,1)$ and $\tilde{\eta}\geq0$ such that
	\begin{equation*}
		\mathbb{E}\left[ L(\widetilde{X}(t_i)) | \widetilde{X}(t_{i-1}) \right] \leq \tilde{\rho}L(\widetilde{X}(t_{i-1}))+\tilde{\eta}, \quad \forall \ i\in\mathbb{N}.
	\end{equation*}
\end{definition}\noindent
Analogously to the continuous case, this condition implies geometric ergodicity of the numerical method if SDE \eqref{eq:semi_linear_SDE} is elliptic. If the equation is not elliptic, in addition to the discrete Lyapunov condition, $k$-step hypoellipticity and a discrete irreducibility condition are required. For further details, the reader is referred to \cite{Ableidinger2017,Mattingly2002}. 

Euler-Maruyama type methods do not preserve this property, especially when the drift of SDE \eqref{eq:semi_linear_SDE} is only locally Lipschitz continuous. In particular, the problem does not lie in the preservation of hypoellipticity and irreducibility, but in preserving the Lyapunov structure \cite{Mattingly2002}. Consider, for example, the cubic one-dimensional SDE
\begin{equation}\label{eq:Toy}
	dX(t)=-X^3(t)dt+\sigma dW(t), \quad X(0)=X_0.
\end{equation}
Since $F(x)x=-x^4\leq1-x^4\leq2-2x^2$, this SDE satisfies the dissipativity condition \eqref{eq:dissipative}. Thus, $L(x)=1+x^2$ is a Lyapunov function satisfying \eqref{eq:Lyapunov} and the process $(X(t))_{t\in[0,T]}$ is geometrically ergodic. However, it is shown in Lemma 6.3 of \cite{Mattingly2002} that, if $\mathbb{E}[X_0^2]\geq2/\Delta$, the second moment of the Euler-Maruyama method goes to infinity as the time $t_i$ grows, since
\begin{equation*}
	\mathbb{E}\left[ \left( \widetilde{X}^{\textrm{EM}}(t_i) \right)^2 \right] \geq \mathbb{E}[X_0^2] + t_i.
\end{equation*}
Thus, for any fixed time step $\Delta>0$ (even when it is chosen to be arbitrarily small), one can find a starting value $X_0$ such that the Euler-Maruyama method does not converge to a unique invariant distribution. 
This also means that for any $\Delta>0$ and $X_0$, there is a positive probability of blow-up, as discussed in \cite{Humphries2001}.

In contrast, splitting methods may preserve the Lyapunov structure \cite{Ableidinger2017,Bou-Rabee2010,LeimkuhlerMatthewsStoltz2015}. This is also proved for the proposed splitting method and the Lyapunov function $L(x)=1+\norm{x}^2$, under an additional Assumption on the function $f$ and the matrix~$A$, respectively.
\begin{assumption}\label{assum:f_additional}
	There exists a constant $c_3\geq 0$ such that for any $x\in \mathbb{R}^d$, it holds that
	\begin{equation*}
		\norm{f(x;\Delta)}^2 \leq \norm{x}^2+c_3 \Delta, \quad \forall \Delta \in (0,\Delta_0].
	\end{equation*}
\end{assumption}
\begin{assumption}\label{assum:A_norm}
	The matrix A is such that $\norm{e^{A\Delta}}<1$ for all $\Delta \in (0,\Delta_0]$.
\end{assumption}
\begin{theorem}[Discrete Lyapunov condition]\label{thm:Lyapunov_discrete}
	Let $\widetilde{X}^{\textrm{LT}}(t_i)$ 
	be the splitting method defined through \eqref{SP1_FHN},
	and let Assumptions \ref{assum:f_additional} and \ref{assum:A_norm} hold.  
	Then $\widetilde{X}^{\textrm{LT}}(t_i)$ satisfies the discrete Lyapunov condition of Definition~\ref{def:Lyapunov_discrete} with Lyapunov function $L(x)=~1+\norm{x}^2$.
\end{theorem}
\begin{proof}
	We have that
	\begin{eqnarray*}
		\norm{\widetilde{X}^{\textrm{LT}}(t_i)}^2&=&\norm{e^{A\Delta}f(\widetilde{X}^{\textrm{LT}}(t_{i-1});\Delta)+\xi_{i-1}}^2 \\
		&=&f(\widetilde{X}^{\textrm{LT}}(t_{i-1});\Delta)^\top (e^{A\Delta})^\top (e^{A\Delta}) f(\widetilde{X}^{\textrm{LT}}(t_{i-1});\Delta)\\ && \hspace{0.5cm}+f(\widetilde{X}^{\textrm{LT}}(t_{i-1});\Delta)^\top(e^{A\Delta})^\top \xi_{i-1} + \xi_{i-1}^\top e^{A\Delta} f(\widetilde{X}^{\textrm{LT}}(t_{i-1});\Delta) + \xi_{i-1}^\top \xi_{i-1}.
	\end{eqnarray*}
	Denoting the diagonal entries of the covariance matrix $C(\Delta)$ \eqref{eq:Cov} by $c_{jj}(\Delta)$, taking the expectation, using the fact that $\widetilde{X}^{\textrm{LT}}(t_{i-1})$ and $\xi_{i-1}$ are independent, that $\mathbb{E}[\xi_{i-1}]=~0_d$, that $\mathbb{E}[\xi_{i-1}^\top]=0_d^\top$ and that
	\begin{equation*}
		\bar{C}(\Delta):=\sum\limits_{j=1}^{d}c_{jj}(\Delta)=
		\mathbb{E}[\xi_{i-1}^\top \xi_{i-1}  ],
	\end{equation*}
	we get that
	\begin{eqnarray}
		\mathbb{E}\left[ \norm{\widetilde{X}^{\textrm{LT}}(t_i)}^2 \right] 
		&=& \label{eq:help} \mathbb{E}\left[ \norm{ e^{A\Delta} f(\widetilde{X}^{\textrm{LT}}(t_{i-1});\Delta) }^2 \right] + \bar{C}(\Delta).
	\end{eqnarray}	
	Considering
	\begin{equation*}
		L(\widetilde{X}^{\textrm{LT}}(t_i))=1+\norm{\widetilde{X}^{\textrm{LT}}(t_i)}^2,
	\end{equation*}
	and using \eqref{eq:help} and Assumption \ref{assum:f_additional}, we get that
	\begin{eqnarray*}
		\mathbb{E}\left[ L(\widetilde{X}^{\textrm{LT}}(t_i))| \widetilde{X}^{\textrm{LT}}(t_{i-1}) \right]&=&1+\norm{e^{A\Delta}f(\widetilde{X}^{\textrm{LT}}(t_{i-1});\Delta)}^2 + \bar{C}(\Delta) \\
		&\leq& 1+\norm{e^{A\Delta}}^2 \left( \norm{\widetilde{X}^{\textrm{LT}}(t_{i-1})}^2+c_3\Delta \right) + \bar{C}(\Delta)+\norm{e^{A\Delta}}^2 \\
		&=& \norm{e^{A\Delta}}^2 L(\widetilde{X}^{\textrm{LT}}(t_{i-1})) + 1 + \norm{e^{A\Delta}}^2c_3\Delta +\bar{C}(\Delta),
	\end{eqnarray*}
	where we added  $\norm{e^{A\Delta}}^2$ in the inequality. Thus, applying Assumption \ref{assum:A_norm}, the discrete Lyapunov condition of Definition \ref{def:Lyapunov_discrete} is satisfied for
	\begin{equation*}
		\tilde{\rho}=\norm{e^{A\Delta}}^2 <1 \quad \text{and} \quad \tilde{\eta}=1 + \norm{e^{A\Delta}}^2c_3\Delta+\bar{C}(\Delta)>0,
	\end{equation*}
	which proves the result.
\end{proof}
In the following corollary of Theorem \ref{thm:Lyapunov_discrete}, we show that the second moment of the splitting method is asymptotically bounded by a constant which is independent of $T$, $\Delta$ and~$i$. In particular, this bound is reached exponentially fast, independently of the choice of $X_0$, in agreement with the geometric ergodicity of the splitting method. This result also requires Assumption~\ref{assum:f_additional} and an assumption related to the matrix $A$. 
\begin{assumption}\label{assum:A_norm_log}
	The matrix A is such that the logarithmic norm $\mu(A)<0$.
\end{assumption}\noindent
Note that Assumption \ref{assum:A_norm_log} implies Assumption \ref{assum:A_norm}, since $\norm{e^{A\Delta}}\leq e^{\mu(A)\Delta}$ \cite{Strom1975}. However, the converse is not true in general. Assumption \ref{assum:A_norm_log} is, e.g., satisfied for normal matrices, where all eigenvalues have strictly negative real part \cite{Soderlind2006}. Matrices contained in this class are, e.g., diagonal ones with strictly negative diagonal entries.
\begin{corollary}[Asymptotic second moment bound]\label{cor:asym_2nd_moment}
	Let $\widetilde{X}^{\textrm{LT}}(t_i)$ 
	be the splitting method defined through \eqref{SP1_FHN}, 
	and let Assumptions~\ref{assum:f_additional} and \ref{assum:A_norm_log} hold. Then, there exists a constant ${K}^{\textrm{LT}}_\infty>0$, 
	which is independent of $T$, $\Delta$ and $i$, such that
	\begin{equation*}
		\lim\limits_{t_i\to\infty}\mathbb{E}\left[ \norm{\widetilde{X}^{\textrm{LT}}(t_i)}^2 \right] \leq {K}^{\textrm{LT}}_\infty. 
	\end{equation*}
\end{corollary}
\begin{proof}
	Recalling \eqref{eq:help} from the proof of Theorem \ref{thm:Lyapunov_discrete}, and using Assumption \ref{assum:f_additional} and the logarithmic norm, we further obtain
	\begin{eqnarray*}
		\mathbb{E}\left[ \norm{\widetilde{X}^{\textrm{LT}}(t_i)}^2 \right]&\leq&e^{2\mu(A)\Delta} \left( \mathbb{E}\left[ \norm{\widetilde{X}^{\textrm{LT}}(t_{i-1})}^2 \right]+ c_3\Delta \right) +\bar{C}(\Delta).
	\end{eqnarray*}
	Now, we can perform back iteration, yielding
	\begin{eqnarray*}
		\mathbb{E}\left[ \norm{\widetilde{X}^{\textrm{LT}}(t_i)}^2 \right]&\leq&e^{2\mu(A)t_i} \mathbb{E}\left[ \norm{X_0}^2 \right]+ c_3\Delta \sum\limits_{k=1}^{i} e^{2\mu(A)k\Delta} +\bar{C}(\Delta) \sum\limits_{k=0}^{i-1}e^{2\mu(A)k\Delta}.
	\end{eqnarray*}
	Using that
	\begin{equation*}
		\sum\limits_{k=1}^{i}e^{2\mu(A)k\Delta}=\left(1- e^{2\mu(A)t_i} \right)\frac{e^{2\mu(A)\Delta}}{1- e^{2\mu(A)\Delta}}, \quad
		\sum\limits_{k=0}^{i-1}e^{2\mu(A)k\Delta}=\left(1- e^{2\mu(A)t_i} \right)\frac{1}{1- e^{2\mu(A)\Delta}},
	\end{equation*}
	we obtain 
	\begin{equation}\label{eq:bound_LT}
		\mathbb{E}\left[ \norm{\widetilde{X}^{\textrm{LT}}(t_i)}^2 \right]\leq e^{2\mu(A)t_i} \mathbb{E}\left[ \norm{X_0}^2 \right]+\left( 1-e^{2\mu(A)t_i} \right)\left( \frac{c_3\Delta e^{2\mu(A)\Delta}}{1-e^{2\mu(A)\Delta}}+\frac{\bar{C}(\Delta)}{1-e^{2\mu(A)\Delta}} \right).
	\end{equation}
	Applying Assumption \ref{assum:A_norm_log}, yields
	\begin{eqnarray*}
		\lim\limits_{t_i\to\infty}\mathbb{E}\left[ \norm{\widetilde{X}^{\textrm{LT}}(t_i)}^2 \right]&\leq&\frac{c_3\Delta e^{2\mu(A)\Delta}}{1-e^{2\mu(A)\Delta}}+\frac{\bar{C}(\Delta)}{1-e^{2\mu(A)\Delta}}. 
	\end{eqnarray*}
	Now, we have that 
	\begin{equation}\label{eq:help3}
		\frac{\Delta e^{2\mu(A)\Delta}}{1-e^{2\mu(A)\Delta}} \leq -\frac{1}{2\mu(A)}, \quad \forall \ \Delta>0 \quad \text{and} \quad \frac{\Delta}{1-e^{2\mu(A)\Delta}} \leq \frac{\Delta_0}{1-e^{2\mu(A)\Delta_0}}, \quad \forall \ \Delta \in (0,\Delta_0].
	\end{equation}\noindent
	Moreover, recalling that $e^{A\Delta}=\mathbb{I}_d+\Delta A+O(\Delta^2)$, it follows from \eqref{eq:Cov} that $\bar{C}(\Delta)=O(\Delta)$. This implies the result.
\end{proof}
\begin{remark}
	Theorem \ref{thm:Lyapunov_discrete} and Corollary \ref{cor:asym_2nd_moment} can be proved similarly for the Strang splitting method~\eqref{SP3_FHN}.
\end{remark}

\paragraph*{The one-dimensional case}

Consider the case $d=1$, $\Sigma=\sigma>0$ and $A=-a<0$. In this case, the solution of the linear SDE \eqref{SDE_FHN} corresponds to the Ornstein-Uhlenbeck process 
\begin{equation}\label{eq:OU}
	X^{[1]}(t)=e^{-at}X_0^{[1]}+\sigma \int\limits_{0}^{t}e^{-a(t-s)}dW(s),
\end{equation}
with variance \eqref{eq:Cov} given by
\begin{equation}\label{eq:OU_C}
	C(t)=\frac{\sigma^2}{2a}(1-e^{-2at}).
\end{equation}
Thus, due to the specific form of \eqref{eq:OU_C}, the previously derived bound can be expressed in closed-form for any time $t_i$. In particular, the following (asymptotic) bound for the second moment of the splitting method \eqref{SP1_FHN} is obtained.
\begin{corollary}[Closed-form (asymptotic) second moment bound]\label{cor:bounds_1dim}
	Let $d=1$, $\Sigma=\sigma>0$ and \text{$A=-a<0$}. Further, let  $\widetilde{X}^{\textrm{LT}}(t_i)$ be the  splitting method defined through \eqref{SP1_FHN}, and let
	Assumption~\ref{assum:f_additional} be satisfied. Then, it holds that
	\begin{eqnarray*}
		\mathbb{E}\left[ (\widetilde{X}^{\textrm{LT}}(t_i))^2 \right] &\leq& {K}^{\textrm{LT}}(t_i,X_0):=e^{-2at_i}\mathbb{E}\left[ X_0^2 \right] + (1-e^{-2at_i}) \left(\frac{c_3}{2a}+\frac{\sigma^2}{2a}\right),\\
		\lim\limits_{t_i\to\infty}\mathbb{E}\left[ (\widetilde{X}^{\textrm{LT}}(t_i))^2 \right] &\leq& {K}^{\textrm{LT}}_\infty:=\frac{c_3}{2a}+\frac{\sigma^2}{2a}. 
	\end{eqnarray*}
\end{corollary}
\begin{proof}
	Using \eqref{eq:OU_C} and noting that $\bar{C}(\Delta)=C(\Delta)$ and that $\mu(A)=-a<0$,
	the result is a direct consequence of Corollary~\ref{cor:asym_2nd_moment}.
\end{proof}
Note that, for $t_i=0$, the bound ${K}^{\textrm{LT}}(0,X_0)$ in Corollary \ref{cor:bounds_1dim} coincides with~$\mathbb{E}[X_0^2]$. Moreover, since $a>0$, the distribution of \eqref{eq:OU} converges to a unique limit
\begin{equation}\label{eq:OU_lim}
	X^{[1]}(t) \xrightarrow[t \to \infty]{\mathcal{D}} \mathcal{N}\Bigl(0,\frac{\sigma^2}{2a}\Bigr).
\end{equation}
Intuitively, this fact, combined with Assumption \ref{assum:f_additional}, guarantees the geometric ergodicity of the splitting method obtained via Theorem \ref{thm:Lyapunov_discrete}, and thus the existence of the asymptotic bound for the second moment reported in Corollary \ref{cor:bounds_1dim}.

\paragraph*{Cubic model problem}

For an illustration of the derived bound, consider again SDE \eqref{eq:Toy}. 
We propose to rewrite this equation as
\begin{equation*}
	dX(t)=\left( -X(t) +X(t) -X^3(t)\right)dt+\sigma dW(t),
\end{equation*}
and to choose 
\begin{equation}\label{eq:A_N_Toy}
	A=-1<0, \qquad N(X(t))=X(t)-X^3(t).
\end{equation}
The exact solution of the resulting linear SDE \eqref{SDE_FHN} is then given by \eqref{eq:OU} for $a=1$, and that 
of ODE~\eqref{ODE_FHN} is given by
\begin{equation}\label{eq:f_toy}
	X^{[2]}(t)=f(X_0^{[2]};t)=\frac{X_0^{[2]}}{\sqrt{e^{-2t}+(X_0^{[2]})^2(1-e^{-2t})}}.
\end{equation}
This choice guarantees that all required assumptions are satisfied.
\begin{proposition}\label{prop:A1_A2_N_toy}
	Let $A$, $N$ and $f$ be as in \eqref{eq:A_N_Toy} and \eqref{eq:f_toy}, respectively. Then, Assumptions \ref{assum:1}--\ref{assum:A_norm_log} are satisfied.
\end{proposition}
\begin{proof}
	The proof is given in Appendix \ref{appA_FHN}.
\end{proof}
Therefore, the proposed splitting method \eqref{SP1_FHN} applied to SDE \eqref{eq:Toy} is not only mean-square convergent, but also geometrically ergodic. In particular, while even for arbitrarily small $\Delta$ one can find $X_0$ such that the second moment of the Euler-Maruyama method explodes (see the beginning of Section \ref{sec:5:2_FHN}), the second moment of the splitting method is bounded by ${K}^{\textrm{LT}}(t_i,X_0)$, which converges to the constant ${K}^{\textrm{LT}}_\infty={1}/{4}+{\sigma^2}/{2}$ exponentially fast and for any choice of the initial value $X_0$, see Corollary \ref{cor:bounds_1dim}.

In Figure \ref{fig:bound_toy}, we illustrate the derived second moment bound ${K}^{\textrm{LT}}(t_i,X_0)$ (grey solid line) of Corollary \ref{cor:bounds_1dim} for SDE \eqref{eq:Toy} as a function of the time $t_i$, in comparison with $\mathbb{E}[X^2(t_i)]$ (red dashed line). The latter is estimated based on $10^4$ paths generated under the Lie-Trotter splitting \eqref{SP1_FHN}. The asymptotic bound ${K}^{\textrm{LT}}_\infty$ of Corollary \ref{cor:bounds_1dim} is indicated by the blue dotted line. 

\begin{remark}
	For SDE \eqref{eq:Toy}, an immediate choice of the subequations of the splitting framework would also be $N(X(t))=-X^3(t)$ and $A=0$. For this choice, Assumption \ref{assum:1} related to the locally Lipschitz function $N$ is satisfied, and thus the resulting splitting method $\eqref{SP1_FHN}$ is mean-square convergent. Also, Assumptions \ref{assum:A_hypo} and \ref{assum:f_additional} are satisfied. However, since $e^{A\Delta}=1$ and $\mu(A)=0$, Assumptions \ref{assum:A_norm} and \ref{assum:A_norm_log} do not hold, asymptotic bounds cannot be derived, and the preservation of ergodicity remains an open question. In particular, in contrast to the proposed approach (see formulas \eqref{eq:OU} and \eqref{eq:OU_lim}), the distribution of the solution $X^{[1]}(t)=X_0^{[1]}+\sigma W(t)$ of the resulting linear SDE~\eqref{SDE_FHN} does not converge to a unique limit as $t$ tends to infinity.
\end{remark}

\begin{figure}
	\begin{centering}
		\includegraphics[width=0.6\textwidth]{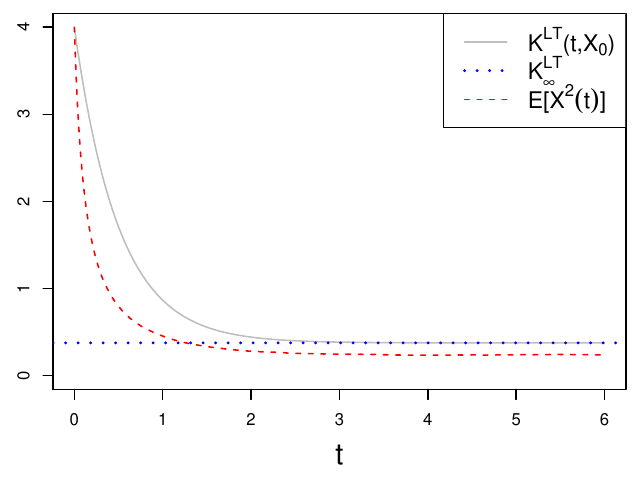}	
		\caption{Bound ${K}^{\textrm{LT}}(t_i,X_0)$ (as a function of time) and asymptotic bound ${K}^{\textrm{LT}}_\infty$ of Corollary \ref{cor:bounds_1dim} for SDE \eqref{eq:Toy} ($a=1 $ and $c_3=~1/2$) with $\sigma=1/2$ and $X_0=2$, and estimate of $\mathbb{E}[X^2(t_i)]$ obtained from $10^4$ paths generated under the LT splitting.}
		\label{fig:bound_toy}
	\end{centering}
\end{figure}

\section{Stochastic FitzHugh-Nagumo model}
\label{sec:6_FHN}

In this section, the proposed splitting strategy is illustrated on the stochastic FHN model, a widely used neuronal model. It is given by the $2$-dimensional SDE \eqref{FHN} with solution $X(t):=(V(t),U(t))^{\top}$ for $t \in [0,T]$.
This equation has been used to model the firing activity of single neurons \cite{FitzHugh1961,Nagumo1962}. If the membrane voltage of the neuron is sufficiently high, it releases an action potential, also called spike. The first component $(V(t))_{t \in [0,T]}$ describes the membrane voltage of the neuron at time $t$, while the second component $(U(t))_{t \in [0,T]}$ corresponds to a recovery variable modelling the channel kinetics. The parameter $\epsilon>0$ corresponds to the time scale separation of the two components and $\beta\geq 0$ and $\gamma>0$ are position and duration parameters of an excitation, respectively.

\subsection{Properties of the FHN model}
\label{sec:6:1}

If both noise intensities $\sigma_1$ and $\sigma_2$ are strictly positive, the model is elliptic. If $\sigma_1=0$, the diffusion term becomes $\Sigma dW(t)=(0,\sigma_2)^\top dW_2(t)$, corresponding to the notation in \eqref{eq:hypo_SDE}. In this case, due to the $U$-component entering the first entry of the drift $F(X(t))$, the model is hypoelliptic. This is confirmed by the fact that 
\begin{equation}\label{eq:cond_hypo_FHN}
	\partial_u F_1(x) \sigma_2=-\frac{\sigma_2}{\epsilon}  \neq 0,
\end{equation}
guaranteeing Condition \eqref{eq:cond_hypo}. We refer to \cite{Berglund2012,Bonaccorsi2008,Kelly2018,Muratov2008} and to \cite{Ditlevsen2019,Leon2018} for the consideration of the elliptic and hypoelliptic FHN model, respectively, and to \cite{Quentin2020} for an investigation of both cases.

Moreover, it has been proved that the FHN model is ergodic, see, e.g., \cite{Bonaccorsi2008,Leon2018}. Here, we study this property under a restricted parameter space, for which SDE \eqref{FHN} satisfies the dissipativity condition \eqref{eq:dissipative} such that the function $L(x)=1+\norm{x}^2$ is a Lyapunov function meeting Condition~\eqref{eq:Lyapunov}.
\begin{proposition}[Dissipativity of the FHN model]\label{prop:dissipative_FHN}
	Let 
	\begin{equation*}
		\left|\gamma-\frac{1}{\epsilon}\right|< 2\min\left\{ \frac{1}{\epsilon},1-\tau \right\},
	\end{equation*}
	for some arbitrarily small $\tau \in (0,1)$.
	Then, the drift $F$ of the FHN model \eqref{FHN} satisfies the dissipativity condition \eqref{eq:dissipative}.
\end{proposition}
\begin{proof}
	We have that
	\begin{eqnarray*}
		(F(x),x)=\Bigl( \begin{pmatrix}
			\frac{1}{\epsilon}(v-v^3-u) \\
			\gamma v-u+\beta
		\end{pmatrix}, \begin{pmatrix}
			v \\ u
		\end{pmatrix} \Bigr)
		= \frac{1}{\epsilon}(v^2-v^4)+vu(\gamma-\frac{1}{\epsilon})-u^2+\beta u.
	\end{eqnarray*}
	Defining $c:=|\gamma-1/\epsilon|$, using  $2vu\leq v^2+u^2$ and $v^2-v^4\leq1-v^2$, applying Young's inequality $\beta u \leq u^2\bar{\tau}/2 +\beta^2/2\bar{\tau}$, for some arbitrarily small $\bar{\tau}>0$, and setting $\tau=\bar{\tau}/2$, we obtain
	\begin{eqnarray*}
		(F(x),x) &\leq& \frac{1}{\epsilon}(1-v^2)+\frac{c}{2}(v^2+u^2)-u^2+{\tau}u^2+\frac{\beta^2}{2\bar{\tau}} \\
		&=& -v^2(\frac{1}{\epsilon}-\frac{c}{2})-u^2(1-{\tau}-\frac{c}{2})+\frac{1}{\epsilon}+\frac{\beta^2}{2\bar{\tau}}.
	\end{eqnarray*}
	Since
	\begin{equation*}
		\frac{1}{\epsilon}-\frac{c}{2}>0 \quad \text{and} \quad 1-{\tau}-\frac{c}{2}>0,
	\end{equation*}
	by assumption, it follows that
	\begin{equation*}
		(F(x),x)\leq \alpha-\delta\norm{x}^2,
	\end{equation*}
	where $\alpha=1/\epsilon+{\beta^2}/{2\bar{\tau}}>0$ and $\delta=\min\{ 1/\epsilon-c/2,1-\tau-c/2 \}>0$.
\end{proof}\noindent
Note that the condition on the model parameters in Proposition \ref{prop:dissipative_FHN} is satisfied for parameter settings which may be relevant in applications, see Section \ref{sec:7_FHN}. For example, it is met  when $\gamma=1/\epsilon$.

\subsection{Splitting method for the FHN model}

The FHN model \eqref{FHN} is a semi-linear SDE of type \eqref{eq:semi_linear_SDE}. The choice of the matrix $A$ and the function $N(X(t))$ is not unique. While the locally Lipschitz term $-V^3(t)/\epsilon$ and the constant $\beta$ have to enter into $N(X(t))$, the goal is to allocate the remaining terms such that as many of the introduced assumptions as possible are satisfied. For the splitting method to satisfy Assumption~\ref{assum:A_hypo}, and thus to be $1$-step hypoelliptic, the term $-U(t)/\epsilon$ of the first component of the drift must enter into $AX(t)$. Moreover, shifting the term $\gamma V(t)$ to $AX(t)$ leads to a decoupling of the resulting ODE \eqref{ODE_FHN} such that its global solution can be derived exactly and proved to satisfy Assumption~\ref{assum:f_additional}. Thus, there are four strategies left, depending on whether the remaining terms $V(t)/\epsilon$ and $-U(t)$ enter into $AX(t)$ or $N(X(t))$. The only case where the matrix $A$ meets Assumption \ref{assum:A_norm} (under a restricted parameter space) is when $-U(t)$ appears in $AX(t)$ and $V(t)/\epsilon$ in $N(X(t))$. Similar to the proposed splitting of SDE \eqref{eq:Toy}, the resulting linear SDE \eqref{SDE_FHN} is then geometrically ergodic. In particular, it corresponds to a version of the well-studied damped stochastic harmonic oscillator whose matrix exponential $e^{At}$ and covariance matrix $C(t)$ have manageable expressions. Therefore, we propose to choose the matrix $A$ and the function $N$ as follows
\begin{equation}\label{eq:A_N_FHN}
	A=\begin{pmatrix}
		0 \ & \ -\frac{1}{\epsilon} \\
		\gamma \ & \ -1
	\end{pmatrix}, \qquad N(X(t))=\begin{pmatrix}
		\frac{1}{\epsilon}\bigl( V(t)-V^3(t) \bigr) \\
		\beta
	\end{pmatrix}.
\end{equation}

The resulting linear damped stochastic harmonic oscillator \eqref{SDE_FHN} with $A$ as in \eqref{eq:A_N_FHN} is weakly-, critically- or over-damped, depending on whether
\begin{equation}\label{kappa}
	\kappa:=\frac{4\gamma}{\epsilon}-1
\end{equation}
is positive, zero or negative, respectively. This terminology, along with the choice of $\kappa$, is linked to the roots of the characteristic function of the underlying deterministic equation, see, e.g., Chapter~$5$ in \cite{Weiglhofer1999}. In particular, the sign of $\kappa$ determines the shape of the exponential of the matrix $A$.
If~$\kappa=0$, 
\begin{equation*}\label{M1}
	e^{At}= e^{-{\frac{t}{2}}} \begin{pmatrix}
		1+\frac{t}{2} & -\frac{t}{\epsilon} \\
		\frac{\epsilon t}{4} & 1-\frac{t}{2}
	\end{pmatrix}.
\end{equation*}
If $\kappa>0$, 
\begin{equation*}\label{M2}
	e^{At}=e^{-\frac{t}{2}}\begin{pmatrix}
		\cos(\frac{1}{2}\sqrt{\kappa}t)+\frac{1}{\sqrt{\kappa}}\sin(\frac{1}{2}\sqrt{\kappa}t) & -\frac{2}{\epsilon\sqrt{\kappa}}\sin(\frac{1}{2}\sqrt{\kappa}t) \\
		\frac{2\gamma}{\sqrt{\kappa}}\sin(\frac{1}{2}\sqrt{\kappa}t) & \cos(\frac{1}{2}\sqrt{\kappa}t)-\frac{1}{\sqrt{\kappa}}\sin(\frac{1}{2}\sqrt{\kappa}t)
	\end{pmatrix}.
\end{equation*}
If $\kappa<0$, the sine and cosine terms of the above expressions can be rearranged using the relations 
\begin{equation}\label{relations}
	\cos\left(\frac{1}{2}\sqrt{\kappa}t\right)=\cosh\left(\frac{1}{2}\sqrt{-\kappa}t\right) \quad \text{and} \quad \frac{1}{\sqrt{\kappa}}\sin\left(\frac{1}{2}\sqrt{\kappa}t\right)=\frac{1}{\sqrt{-\kappa}}\sinh\left(\frac{1}{2}\sqrt{-\kappa}t\right).
\end{equation}\noindent
Moreover, the covariance matrix $C(t)$ \eqref{eq:Cov} also depends on the sign of $\kappa$ and is given as follows.
If $\kappa=0$,
\begin{eqnarray*}
	c_{11}(t)&=& \frac{e^{-t}}{4\epsilon^2}\left( 4 \sigma_2^2 \left( -2+2e^t-t(2+t) \right) + \epsilon^2\sigma_1^2 \left( -10+10e^t-t(6+t) \right) \right), \\
	c_{12}(t)&=&c_{21}(t)=\frac{e^{-t}}{8\epsilon}\left( -4\sigma_2^2t^2+\epsilon^2\sigma_1^2 \left( 4e^t-(2+t)^2 \right) \right), \\
	c_{22}(t)&=&\frac{e^{-t}}{16}\left( 4\sigma_2^2 \left( -2+2e^t-(t-2)t \right) + \epsilon^2 \sigma_1^2 \left( -2+2e^t-t(2+t) \right) \right).
\end{eqnarray*}
If $\kappa>0$,
\begin{eqnarray*}
	c_{11}(t)&=&\frac{\epsilon e^{-t}}{2\gamma\kappa} \biggl( -\frac{4\gamma}{\epsilon^2}( \sigma_1^2\gamma+\sigma_2^2\frac{1}{\epsilon})  + \kappa e^t (\sigma_1^2(1+\frac{\gamma}{\epsilon}) +\sigma_2^2\frac{1}{\epsilon^2}) \\ \nonumber && \hspace{0.5cm} + \Bigl( \sigma_1^2(1-\frac{3\gamma}{\epsilon}) +\sigma_2^2\frac{1}{\epsilon^2} \Bigr)\cos(\sqrt{\kappa}t) - \sqrt{\kappa} (\sigma_1^2(1-\frac{\gamma }{\epsilon})+\sigma_2^2\frac{1}{\epsilon^2}) \sin(\sqrt{\kappa}t) \biggr), \\
	c_{12}(t)&=&c_{21}(t)=\frac{\epsilon e^{-t}}{2 \kappa } \biggl( \sigma_1^2\kappa e^t - \frac{2}{\epsilon}(\sigma_1^2 \gamma  +\sigma_2^2\frac{1}{\epsilon}) \\ \nonumber && \hspace{0.5cm} + \Bigl( \sigma_1^2(1-\frac{2\gamma}{\epsilon}) + 2\sigma_2^2 \frac{1}{\epsilon^2} \Bigr) \cos(\sqrt{\kappa}t)  -\sigma_1^2\sqrt{\kappa} \sin(\sqrt{\kappa}t)  \biggr), \\
	c_{22}(t)&=&\frac{\epsilon e^{-t}}{2\kappa} \biggl( (\sigma_2^2\frac{1}{\epsilon}+\sigma_1^2\gamma) \Bigl(  \cos(\sqrt{\kappa}t)-\frac{4\gamma}{\epsilon}+\kappa e^t \Bigr) + (\sigma_2^2\frac{1}{\epsilon}-\sigma_1^2\gamma) \sqrt{\kappa} \sin(\sqrt{\kappa}t) \biggr).
\end{eqnarray*}
If $\kappa<0$, the relations \eqref{relations} can again be used to rewrite the above expressions accordingly. Note that parameter configurations typically considered in the literature fulfill $\kappa>0$, see, e.g., \cite{Ditlevsen2019,Leon2018,Quentin2020}. This is in agreement with the fact that, under $\kappa>0$, SDE \eqref{SDE_FHN} models a weakly damped system which describes oscillatory dynamics.

The exact solution of the resulting ODE \eqref{ODE_FHN} with $N(X(t))$ as in \eqref{eq:A_N_FHN} reads as
\begin{equation}
	X^{[2]}(t)=f(X_0^{[2]};t)=
	\begin{pmatrix}
		\frac{V_0^{[2]}}{\sqrt{e^{-\frac{2t}{\epsilon}}+(V_0^{[2]})^2\left(1-e^{-\frac{2t}{\epsilon}}\right)}} \\
		\beta t + U_0^{[2]}
	\end{pmatrix}.
	\label{Exact_ODE_FHN2}
\end{equation}
The corresponding Lie-Trotter splitting method for the FHN model \eqref{FHN} is then given by \eqref{SP1_FHN}, where the matrix exponential $e^{A\Delta}$, the covariance matrix $C(\Delta)$ and the function $f$ are as reported above.

\subsection{Properties of the splitting method for the FHN model}

In the following proposition, we verify Assumptions \ref{assum:1}--\ref{assum:A_norm}. 
\begin{proposition}\label{prop:A1_A2_N_FHN}
	Let $A$, $N$ and $f$ be as in \eqref{eq:A_N_FHN} and \eqref{Exact_ODE_FHN2}, respectively. Then the following statements hold.
	\begin{itemize}
		\item[(i)] $N$ satisfies Assumption \ref{assum:1}.
		\item[(ii)] $A$ satisfies Assumption \ref{assum:A_hypo}.
		\item[(iii)] If $\beta=0$, then $f$ satisfies Assumption \ref{assum:f_additional}.
		\item[(iv)] If $\gamma=1/\epsilon$, then $A$ satisfies Assumption \ref{assum:A_norm}.
	\end{itemize}	
\end{proposition}
\begin{proof}
	The proof is given in Appendix \ref{appC_FHN}.
\end{proof}\noindent
Therefore, the proposed splitting method \eqref{SP1_FHN} applied to the FHN model \eqref{FHN} is mean-square convergent of order $1$, according to Theorem \ref{thm:convergence_FHN}. 

Applying Theorem \ref{thm:hypo_N_LT}, the method is also $1$-step hypoelliptic and yields a non-degenerate Gaussian distribution with covariance matrix $C(\Delta)$ reported above. This matrix is thus of full rank, even if $\sigma_1=0$ and independently of the value of $\kappa$. 

Moreover, for $\beta=0$ and $\gamma=1/\epsilon$, $L(x)=1+\norm{x}^2$ is a Lyapunov function for the FHN model \eqref{FHN} according to Propositon \ref{prop:dissipative_FHN} and the method satisfies a discrete Lyapunov condition via Theorem~\ref{thm:Lyapunov_discrete}. Combined with the $1$-step hypoellipticity and a discrete irreducibility condition, which can be proved in the same way as done, e.g, in \cite{Ableidinger2017,Chevallier2020,Mattingly2002}, the splitting method is geometrically ergodic. 
Intuitively, the Lyapunov structure of the FHN model is kept by the numerical solution, since the linear SDE \eqref{SDE_FHN} determined by the matrix $A$ in \eqref{eq:A_N_FHN} is geometrically ergodic, implying that the process $(X^{[1]}(t))_{t \in [0,T]}$ converges to a unique invariant distribution given by
\begin{equation*}
	X^{[1]}(t) \xrightarrow[t \to \infty]{\mathcal{D}} \mathcal{N}\Bigl(
	\begin{pmatrix}
		0 \\
		0 
	\end{pmatrix},
	\begin{pmatrix}
		\frac{5 }{2}\sigma_1^2+\frac{2}{\epsilon^2} \sigma_2^2 & \frac{\epsilon}{2} \sigma_1^2 \\
		\frac{\epsilon}{2} \sigma_1^2 & \frac{\epsilon^2}{8} \sigma_1^2+\frac{1}{2}\sigma_2^2
	\end{pmatrix}
	\Bigr),
\end{equation*}
for $\kappa=0$, and
\begin{equation*}
	X^{[1]}(t) \xrightarrow[t \to \infty]{\mathcal{D}} \mathcal{N}\Bigl(
	\begin{pmatrix}
		0 \\
		0 
	\end{pmatrix},
	\begin{pmatrix}
		\frac{\epsilon}{2\gamma}(\sigma_1^2+\frac{\gamma}{\epsilon}\sigma_1^2+\frac{1}{\epsilon^2}\sigma_2^2) & \frac{\epsilon}{2} \sigma_1^2 \\
		\frac{\epsilon}{2} \sigma_1^2 & \frac{1}{2}(\epsilon\gamma\sigma_1^2+\sigma_2^2)
	\end{pmatrix}
	\Bigr),
\end{equation*}
for $\kappa \neq 0$.
Since this fact holds without any restrictions of the parameters, it is expected that the splitting method preserves this property for any values of $\gamma>0$ and $\epsilon>0$. This is confirmed by our numerical experiments (see Section \ref{sec:7_FHN}).

Note also that, under $\gamma=1/\epsilon$, the logarithmic norm $\mu(A)=0$. Thus, Assumption \ref{assum:A_norm_log} is not fulfilled and the asymptotic bound of Corollary \ref{cor:asym_2nd_moment} cannot be derived.

\begin{remark}
	Another plausible choice of the subequations is
	\begin{equation*}
		A=\begin{pmatrix}
			0 \ & \ -\frac{1}{\epsilon} \\
			\gamma \ & \ \ \ 0
		\end{pmatrix}, \qquad N(X(t))=\begin{pmatrix}
			\frac{1}{\epsilon}\bigl( V(t)-V^3(t) \bigr) \\
			-U(t)+\beta
		\end{pmatrix}.
	\end{equation*}
	For this choice, Assumption \ref{assum:1} related to the locally Lipschitz function $N$ is satisfied, and the splitting method is mean-square convergent. In addition,  since the term $-U(t)/\epsilon$ still enters into $AX(t)$, Assumption \ref{assum:A_hypo} holds, and the method is thus also $1$-step hypoelliptic. Moreover, Assumption~\ref{assum:f_additional} would also hold under $\beta=0$. However, in this case, the linear SDE \eqref{SDE_FHN} corresponds to a version of the simple (undamped) harmonic oscillator which is not ergodic. In particular, its  matrix exponential is given by
	\begin{equation*}
		e^{At}=\begin{pmatrix}
			\cos(\frac{\sqrt{\gamma}t}{\sqrt{\epsilon}}) & -\frac{1}{\sqrt{\epsilon\gamma}}\sin(\frac{\sqrt{\gamma}t}{\sqrt{\epsilon}}) \\
			\sqrt{\epsilon\gamma}\sin(\frac{\sqrt{\gamma}t}{\sqrt{\epsilon}}) & \cos(\frac{\sqrt{\gamma}t}{\sqrt{\epsilon}})
		\end{pmatrix},
	\end{equation*}
	with $\norm{e^{A\Delta}}\geq 1$ and $\norm{e^{A\Delta}}=1$ for $\gamma=1/\epsilon$ in particular. 
\end{remark}

\section{Numerical experiments for the FHN model}
\label{sec:7_FHN}

We now illustrate the performance of the Lie-Trotter \eqref{SP1_FHN} and Strang \eqref{SP3_FHN} splitting methods in comparison with Euler-Maruyama type methods through a variety of numerical experiments carried out on the FHN model~\eqref{FHN}. First, the proved mean-square convergence order $1$ is illustrated numerically. Second, the ability of the different numerical methods to preserve the qualitative dynamics of neuronal spiking is analysed, in particular their ability to reproduce the correct amplitudes and frequencies of the underlying oscillations when the time step $\Delta$ is increased. Third, the robustness of the numerical methods to changes in the initial condition $X_0$, and how the choice of $X_0$ may influence the preservation of the phases of the underlying oscillations are analysed. All simulations are carried out in the computing environment R \cite{R}. Before we present the simulation results, different Euler-Maruyama type comparison methods, proposed for superlinearly growing coefficients, are recalled.

\subsection{Revision of Euler-Maruyama type methods}
\label{sec:7:1}

In \cite{Hutzenthaler2011}, it has been shown that the Euler-Maruyama method \eqref{EM_FHN} is not mean-square convergent if at least one of the coefficients of the SDE grows superlinearly, 
as this results in unbounded moments of the iterates. Since then, 
several explicit variants of this method have been proposed, which aim to control this unbounded growth.

The first variant, designed for polynomially growing and one-sided Lipschitz drift and globally Lipschitz diffusion coefficients, has been introduced in \cite{Hutzenthaler2012_2}. 
It is based on a taming perturbation which avoids large values caused by the superlinearly growing drift. The method is defined through the iteration
\begin{equation}\label{eq:EM_Tamed}
	\widetilde{X}^{\textrm{TEM}}(t_i)=\widetilde{X}^{\textrm{TEM}}(t_{i-1})+\frac{F(\widetilde{X}^{\textrm{TEM}}(t_{i-1}))\Delta }{1+ \norm{F(\widetilde{X}^{\textrm{TEM}}(t_{i-1}))}\Delta} + \Sigma \sqrt{\Delta} \psi_{i-1},
\end{equation}
and proved to be mean-square convergent of order $1/2$ (order $1$) for SDEs with multiplicative noise (additive noise), i.e., it yields the same convergence rate as achieved by the Euler-Maruyama method in the globally Lipschitz case \cite{Kloeden1992}.

Another variant, aiming to tame both the drift and the diffusion term, has been suggested in \cite{Tretyakov2012}. The method is defined via
\begin{equation}\label{DTEM}
	\widetilde{X}^{\textrm{DTEM}}(t_i)=\widetilde{X}^{\textrm{DTEM}}(t_{i-1})+\frac{F(\widetilde{X}^{\textrm{DTEM}}(t_{i-1}))\Delta + \Sigma \sqrt{\Delta} \psi_{i-1} }{1+ \norm{F(\widetilde{X}^{\textrm{DTEM}}(t_{i-1}))}\Delta + \norm{\Sigma \sqrt{\Delta} \psi_{i-1}}},
\end{equation}
and is designed for the broader class of equations where also the diffusion coefficient is allowed to grow polynomially at infinity and satisfies a one-sided Lipschitz condition. It has been shown to converge with mean-square order $1/2$ (also in the case of additive noise). For similar variants of the Euler-Maruyama method, see, e.g, \cite{Sabanis2013,Zhang2017}.

The strong convergence (without order) of a different class of variants, based on space truncation techniques, has been discussed in \cite{Hutzenthaler2012}.
In particular, we recall the two methods
\begin{equation}\label{TrEM}
	\widetilde{X}^{\textrm{TrEM}}(t_i)=\widetilde{X}^{\textrm{TrEM}}(t_{i-1})+\frac{F(\widetilde{X}^{\textrm{TrEM}}(t_{i-1}))\Delta }{\textrm{max}\left\{ 1,\norm{F(\widetilde{X}^{\textrm{TrEM}}(t_{i-1}))}\Delta \right\}} + \Sigma \sqrt{\Delta} \psi_{i-1},
\end{equation}
\begin{equation}\label{DTrEM}
	\widetilde{X}^{\textrm{DTrEM}}(t_i)=\widetilde{X}^{\textrm{DTrEM}}(t_{i-1})+\frac{F(\widetilde{X}^{\textrm{DTrEM}}(t_{i-1}))\Delta + \Sigma \sqrt{\Delta} \psi_{i-1}}{\textrm{max}\left\{ 1,\Delta \norm{F(\widetilde{X}^{\textrm{DTrEM}}(t_{i-1}))\Delta+ \Sigma \sqrt{\Delta} \psi_{i-1}} \right\}},
\end{equation}
constructed to truncate the drift and the drift and diffusion term, respectively.

Another type of truncated Euler-Maruyama method, with mean-square convergent rate arbitrarily close to $1$, has been proposed in \cite{Mao2015,Mao2016}. Here, we recall the partially truncated variant discussed in \cite{Mao2017}. This method assumes that the drift can be decomposed as 
\begin{equation*}
	F(X(t))=F_1(X(t))+F_2(X(t)),
\end{equation*}
where $F_1$ is globally Lipschitz continuous and $F_2$ satisfies Assumption \ref{assum:1}. It is given by
\begin{equation}\label{PTrEM}	
	\widetilde{X}^{\textrm{PTrEM}}(t_i)=\widetilde{X}^{\textrm{PTrEM}}(t_{i-1})+ \left( F_1(\widetilde{X}^{\textrm{PTrEM}}(t_{i-1})) + F_2^\Delta (\widetilde{X}^{\textrm{PTrEM}}(t_{i-1})) \right) \Delta + \Sigma \sqrt{\Delta} \psi_{i-1},
\end{equation}
where the function $F_2^\Delta$ is a truncated version of $F_2$. In particular, it is given by
\begin{equation*}
	F_2^\Delta(x)=F_2\left( \min\{ \norm{x},\mu^{-1} \bigl(h(\Delta)\bigr) \} \frac{x}{\norm{x}} \right),
\end{equation*}
where $\mu:\mathbb{R}_+\to\mathbb{R}_+$ such that $\mu(r)\to \infty$ as $r\to \infty$ and
\begin{equation*}
	\sup_{\norm{x}\leq r} \Bigl( \norm{F_2(x)} \Bigr) \leq \mu(r), \quad \forall \ r\geq 1,
\end{equation*}
and, for $\Delta^* \in (0,1]$, $h:(0,\Delta^*] \to (0,\infty)$ such that
\begin{equation*}
	h(\Delta^*) \geq \mu(1), \quad \lim\limits_{\Delta \to 0}h(\Delta) =\infty \quad \text{and} \quad \Delta^{1/4}h(\Delta)\leq 1, \ \forall \Delta \in (0,1).
\end{equation*}
Thus, the method is not uniquely defined and depends on the choice of $\mu(\cdot)$ and $h(\cdot)$. Following \cite{Mao2017}, for the cubic model problem \eqref{eq:Toy}, we consider $F_1\equiv 0$, $\mu(r)=r^3$ and $h(\Delta)=\Delta^{-1/5}$. For this choice, the above conditions on $h$ are satisfied for $\Delta^*=1$, since $h(\Delta^*)=\mu(1)=1$. Moreover, it holds that $\mu^{-1}(h(\Delta))=\Delta^{-1/15}$. Numerical experiments for the cubic model problem are reported in Appendix \ref{app_cubic}. For the FHN model \eqref{FHN}, we consider
\begin{equation*}
	F_1(X(t))=\begin{pmatrix}
		\frac{1}{\epsilon}\Bigl(V(t)-U(t)\Bigr) \\
		\gamma V(t)-U(t)+\beta
	\end{pmatrix}, \qquad 
	F_2(X(t))=\begin{pmatrix}
		-\frac{1}{\epsilon}V^3(t) \\
		0
	\end{pmatrix},
\end{equation*}
$\mu(r)=r^3/\epsilon$ and $h(\Delta)=\Delta^{-1/5}$. For this choice, the conditions on $h$ are satisfied for $\Delta^*=1/\epsilon^{-5}$, since then $h(\Delta^*)=1/\epsilon=\mu(1)$. Therefore, when $\epsilon$ is small, this method requires very small time steps $\Delta$, and is thus highly inefficient (see the subsequent sections). 

In the following, we denote by tamed (TEM), diffusion tamed (DTEM), truncated (TrEM), diffusion truncated (DTrEM) and partially truncated (PTrEM) Euler-Maruyama method, the schemes \eqref{eq:EM_Tamed}, \eqref{DTEM}, \eqref{TrEM}, \eqref{DTrEM} and \eqref{PTrEM}, respectively.

\subsection{Convergence order}
\label{sec:7:3}

The mean-square convergence order can be illustrated by approximating the left-hand side of the inequality in Theorem \ref{thm:Zang} (for a fixed time $T$ and $p=1$) with the root mean-squared error (RMSE) defined by
\begin{equation}\label{eq:RMSE}
	\text{RMSE}(\Delta):=\left( \frac{1}{M} \sum_{l=1}^{M} \norm{X^l(T)-\widetilde{X}^l_\Delta(T)}^2  \right)^{1/2},
\end{equation}
where $X^l(T)$ and $\widetilde{X}^l_\Delta(T)$ denote the $l$-th simulated path at a fixed time $T$ of the true process and the approximated process, respectively, for $l=1,\ldots,M$.

\begin{figure}
	\begin{centering}
		\includegraphics[width=0.65\textwidth]{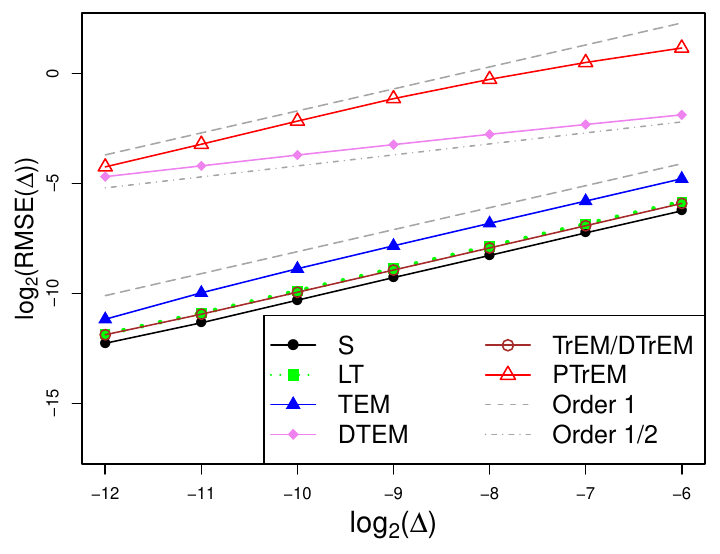}	
		\caption{Illustration of the mean-square convergence order on the FHN model \eqref{FHN} via the RMSE \eqref{eq:RMSE}. All model parameters are set to $1$, $X_0=(0,0)^\top$ and $T=5$.}
		\label{fig:convergence}
	\end{centering}
\end{figure} 

In Figure \ref{fig:convergence}, we report the RMSEs of the different numerical methods in log2 scale as a function of the time step $\Delta$ used to simulate $\widetilde{X}^l_\Delta(T)$. Since the true process is not known, the reference values $X^l(T)$ are simulated with the TEM method \eqref{eq:EM_Tamed} using the small time step $\Delta=2^{-14}$. We verified that using a different scheme for the simulation of the reference paths does not affect the results of the experiments. The approximated trajectories $\widetilde{X}^l_\Delta(T)$ are generated with the LT~\eqref{SP1_FHN}, S \eqref{SP3_FHN}, TEM \eqref{eq:EM_Tamed}, DTEM \eqref{DTEM}, TrEM \eqref{TrEM}, DTrEM~\eqref{DTrEM} and PTrEM~\eqref{PTrEM} method, respectively, under different choices of the time step, namely $\Delta=2^{-k}$, $k=6,\ldots,12$. We consider $T=5$, $M=10^4$, $X_0=(0,0)^{\top}$ and set all model parameters to $1$. All RMSEs are also reported in Table~\ref{table1}. The theoretical convergence order $1$, established in Theorem~\ref{thm:convergence_FHN}, is confirmed numerically. 
The S splitting yields the smallest RMSEs among all considered numerical methods. The RMSEs of the LT method lie slightly above those obtained under the TrEM and DTrEM methods, which are identical (up to the reported precision). The RMSEs of the tamed Euler-Maruyama methods are larger than those obtained under the splitting, TrEM and DTrEM methods. For the DTEM method we only observe a convergence of order $1/2$, in agreement with the observations in \cite{Kelly2018,Tretyakov2012}. The PTrEM method yields the largest RMSEs. However, we observe that for smaller values of $\sigma_i$, $i=1,2$, the method improves (see also Appendix \ref{app_cubic}, where the impact of the noise intensity on the performance of that method is discussed).

\begin{table}[H]
	{\small  
		\caption{RMSE \eqref{eq:RMSE} for different values of $\Delta$. All model parameters are set to $1$, $X_0=(0,0)^\top$ and $T=5$.}
		\vspace{-0.7cm}
		\label{table1}
		\begin{center}
			\scalebox{1.0}{
				\begin{tabular}{|c|c|c|c|c|c|c|c|}
					\hline 
					$\Delta$ & $\textrm{S}$ & $\textrm{LT}$ & $\textrm{TEM}$ & $\textrm{DTEM}$ & $\textrm{TrEM}$ & $\textrm{DTrEM}$ & $\textrm{PTrEM}$ \\ \hline 
					$2^{-6}$ & $0.01320$ & $0.01751$ & $0.03628$ & $0.27208$ & $0.01667$ & $0.01667$ & {$2.23688$}  \\
					
					$2^{-7}$ & $0.00659$ & $0.00871$ & $0.01795$ & $0.20075$ & $0.00824$ & $0.00824$ & {$1.41733$} \\
					
					$2^{-8}$ & $0.00323$ & $0.00431$ & $0.00889$ & $0.14726$ & $0.00410$ & $0.00410$ & {$0.83296$} \\
					
					$2^{-9}$ & $0.00161$ & $0.00213$ & $0.00438$ & $0.10639$ & $0.00204$ & $0.00204$ & {$0.45305$} \\
					
					$2^{-10}$ & $0.00079$ & $0.00106$ & $0.00213$ & $0.07664$ & $0.00101$ & $0.00101$ & {$0.22307$} \\
					
					$2^{-11}$ & $0.00039$ & $0.00053$ & $0.00099$ & $0.05446$ & $0.00050$ & $0.00050$ & {$0.10771$} \\
					
					$2^{-12}$ & $0.00020$ & $0.00027$ & $0.00043$ & $0.03877$ & $0.00027$ & $0.00027$ & {$0.05273$} \\
					\hline
			\end{tabular}}
	\end{center}}
\end{table}  

\vspace{-0.7cm}
\subsection{Preservation of neuronal spiking dynamics: amplitudes and frequencies}
\label{sec:7:2_FHN}

In the following, we analyse the ability of the considered methods to preserve the qualitative neuronal spiking dynamics of the FHN model. In particular, we investigate whether the amplitudes and frequencies of the neuronal oscillations are kept when increasing the time step~$\Delta$. Throughout this and the next section, we omit the DTEM method \eqref{DTEM} as it yields a performance comparable to that of the TEM method \eqref{eq:EM_Tamed}. Moreover, we set $\beta=\sigma_1=0.1$ and $\sigma_2=0.2$, and consider different values for $\gamma$ and $\epsilon$. These parameters are of particular interest, because they regulate the spiking intensity of the neuron and separate the time scale of the two model components, respectively. When $\epsilon$ is small, both variables evolve on different time scales. This situation is often referred to as ``stiff'' case, while larger values of $\epsilon$ refer to the 	``nonstiff'' case, see, e.g., \cite{Stern2020}. Furthermore, these parameters determine the value of $\norm{e^{A\Delta}}$, and thus the validity of Theorem \ref{thm:Lyapunov_discrete}. Since the true process is not available, all reference paths are obtained under the TEM method~\eqref{eq:EM_Tamed}, using the small time step $\Delta=2\cdot 10^{-5}$. Also in this case, the choice of the scheme used to simulate the reference paths does not affect the results of the experiments. Moreover, note that all paths are generated using the same set of pseudo random numbers in each~example.

In the following, the focus lies on the $V$-component of the process solving SDE \eqref{FHN}, modelling the membrane voltage, which can be experimentally recorded with intracellular measurements. Similar results are obtained for the $U$-component.

In Figure \ref{fig:paths}, we report paths of the $V$-component of the FHN model generated under different values of the time step $\Delta$. An increase in $\gamma$ leads to an increase in the frequency of the oscillations, and thus in the number of released spikes. Both splitting methods yield almost overlapping paths as $\Delta$ increases, preserving thus the qualitative dynamics of the model, independently of the choice of the intensity parameter $\gamma$. In contrast, the TEM method underestimates the frequency and overestimates the amplitude of the neuronal oscillations as $\Delta$ increases, for both values of $\gamma$ under consideration. For similar observations regarding tamed methods, we refer to \cite{Kelly2017,Kelly2018}. Note also that the paths btained under the TEM method already start deviating from the reference paths for $\Delta=2\cdot 10^{-3}$, performing thus worse than the TrEM and DTrEM methods. Since $\epsilon=0.05$, the quantity $\Delta^*$ required for the PTrEM method (see Section~\ref{sec:7:1}) equals $1/\epsilon^{-5}=3.125 \cdot 10^{-7}$. We then observe that this method produces the desired paths only for very small time steps ($\Delta=2 \cdot 10^{-8}$ in Figure \ref{fig:paths}) and fails for the other values of $\Delta$ under consideration.

\begin{figure}[t]
	\begin{centering}
		\includegraphics[width=1.0\textwidth]{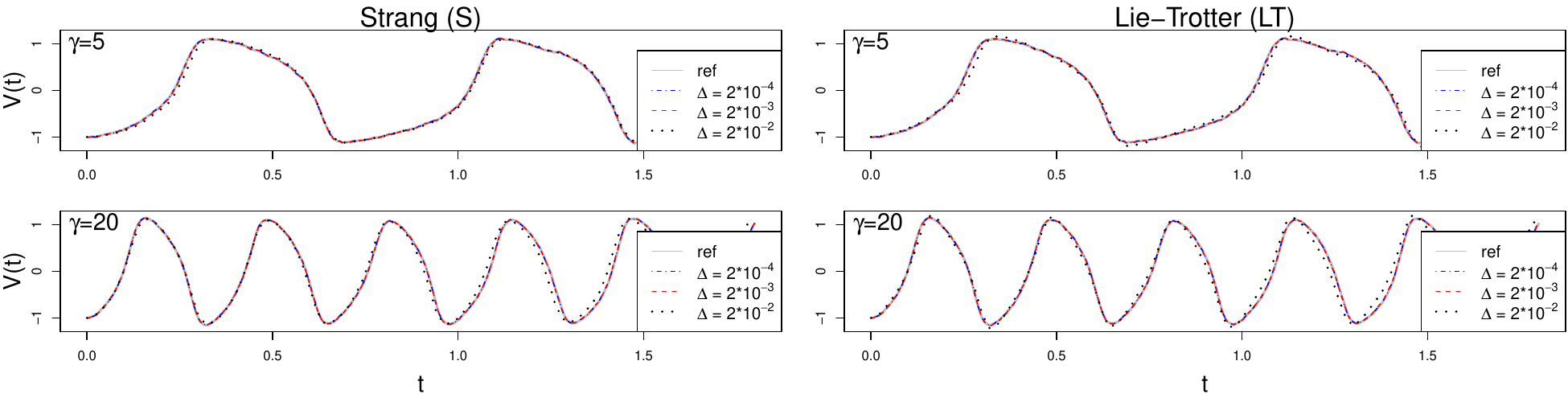}\\
		\includegraphics[width=1.0\textwidth]{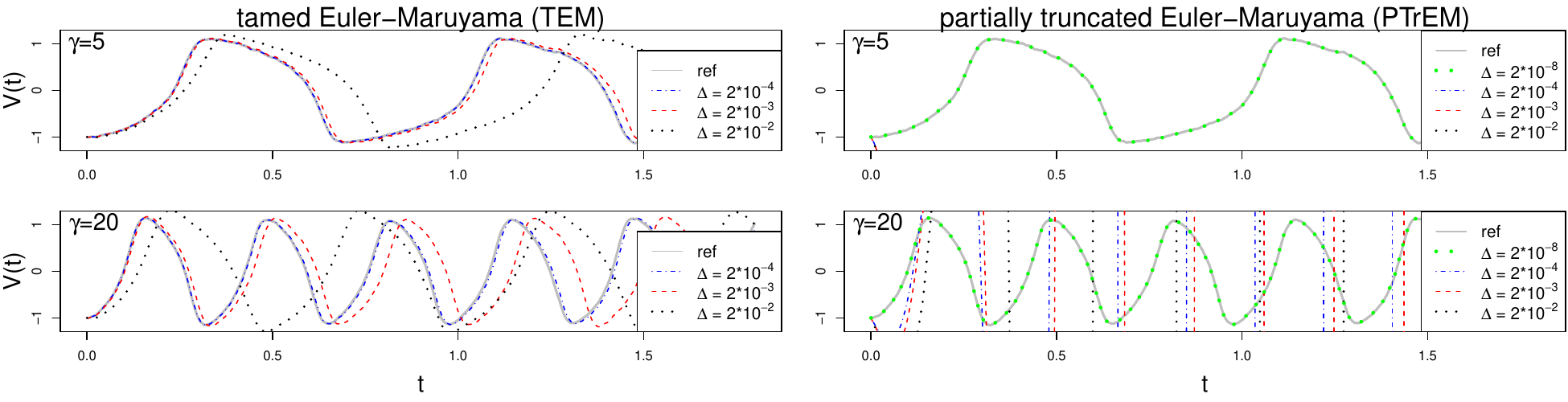}\\
		\includegraphics[width=1.0\textwidth]{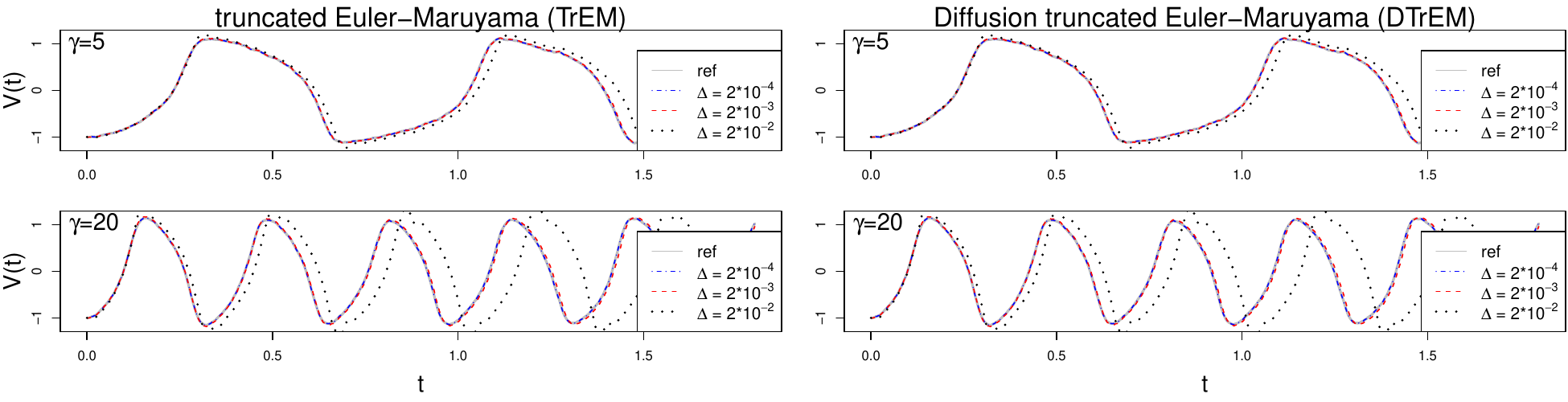}
		\caption{Paths of the $V$-component of the FHN model \eqref{FHN} simulated under the considered numerical methods for $X_0=(-1,0)^\top$, $\beta=\sigma_1=0.1$, $\sigma_2=0.2$, $\epsilon=0.05$, two different values of $\gamma$ and increasing time step $\Delta$. Paths of the PTrEM method are also generated under a smaller time step than used for the other methods, i.e., $\Delta=2\cdot 10^{-8}$. All paths correspond to the same random realisation.}
		\label{fig:paths}
	\end{centering}
\end{figure}

\begin{figure}[t]
	\begin{centering}
		\includegraphics[width=1.0\textwidth]{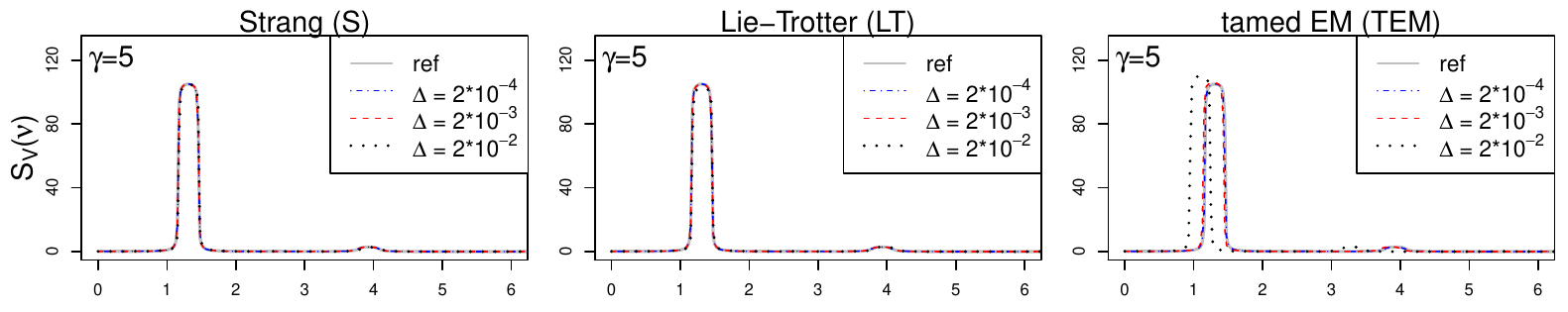}\\
		\includegraphics[width=1.0\textwidth]{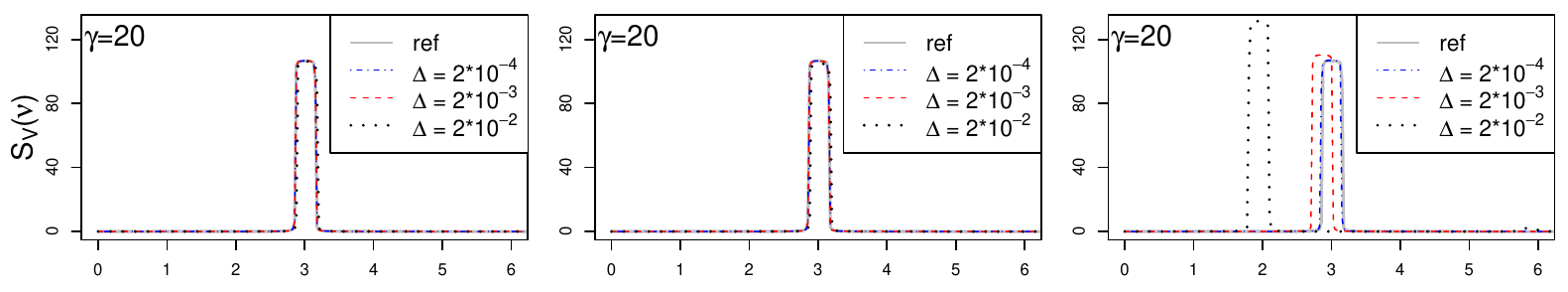}\\
		\includegraphics[width=1.0\textwidth]{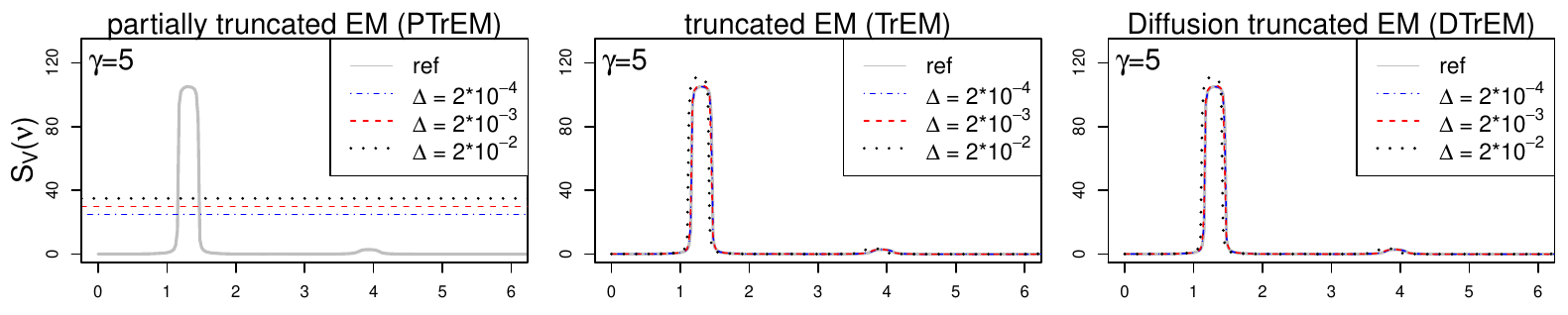}\\
		\includegraphics[width=1.0\textwidth]{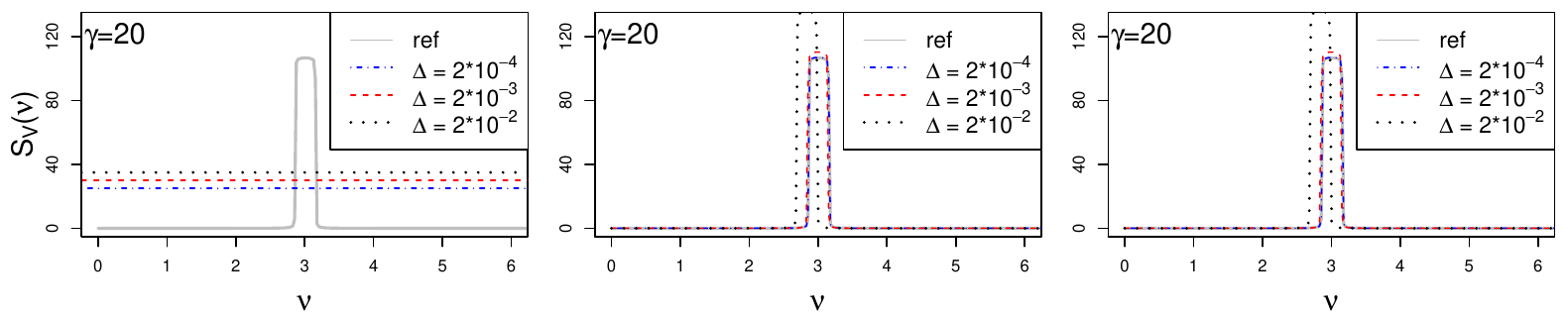}
		\caption{Estimates of the spectral density \eqref{spectrum} of the $V$-component of the FHN model \eqref{FHN} obtained under the considered numerical methods for $X_0=(0,0)^\top$, $\beta=\sigma_1=0.1$, $\sigma_2=0.2$, $\epsilon=0.05$, two different values of $\gamma$ and increasing time step $\Delta$.}
		\label{fig:specDens}
	\end{centering}
\end{figure}

For a deeper investigation of the neuronal spiking dynamics, we consider the spectral density of the $V$-component, which takes into account its autocovariance, and thus the dependence of the membrane voltage on previous epochs. It is given by
\begin{equation}\label{spectrum}
	S_V(\nu)=\mathcal{F}\left\{ r_V \right\} (\nu)=\int\limits_{-\infty}^{\infty} r_V(\tau)e^{-i 2 \pi \nu \tau} \ d\tau, 
\end{equation}
where $\mathcal{F}$ denotes the Fourier transformation, $r_V$ the autocovariance function of $(V(t))_{t \in [0,T]}$ and the frequency $\nu$ can be interpreted as the number of oscillations in one time unit. We estimate the spectral density $S_V(\nu)$ with a smoothed periodogram estimator, see, e.g., \cite{Buckwar2019,Quinn2014}, based on paths generated over the time interval $[0,10^3]$.  We use the R-function \texttt{spectrum} and set the required smoothing parameter to \texttt{span}$=0.3T$.

The estimated spectral densities obtained under different values of $\gamma$ and different choices of the time step $\Delta$ are reported in Figure \ref{fig:specDens}. As desired, for a fixed $\gamma$, all spectral densities estimated from the paths generated under the splitting schemes are almost overlapping as $\Delta$ increases. In contrast, the frequency $\nu$ estimated under the Euler-Maruyama type methods decreases as $\Delta$ increases, and the height of the peaks, carrying information about the amplitude of the neuronal oscillations, increases with $\Delta$. Their performance deteriorates as $\gamma$ increases, the TrEM and DTrEM methods yielding better results than the TEM method. For the considered values of $\Delta$, the spectral densities based on the PTrEM method cannot be derived, because this method produces ``NaN'' values. This is indicated by the horizontal lines in the bottom left panels. Note also that the estimated frequencies are in agreement with those deduced from Figure \ref{fig:paths}. 

\begin{figure}
	\begin{centering}
		\includegraphics[width=1.0\textwidth]{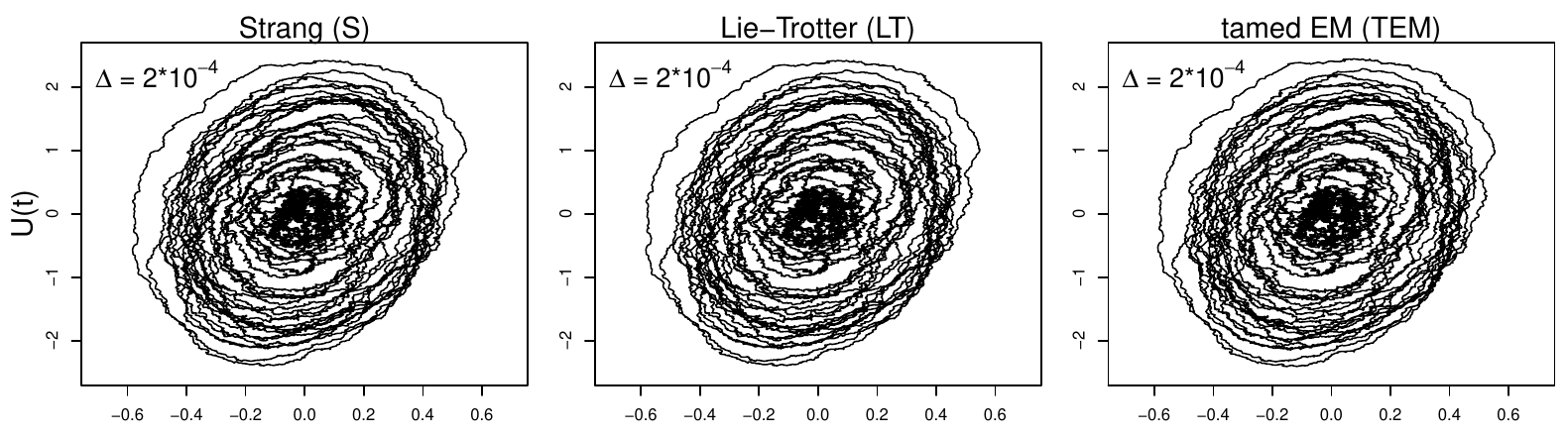}\\
		\includegraphics[width=1.0\textwidth]{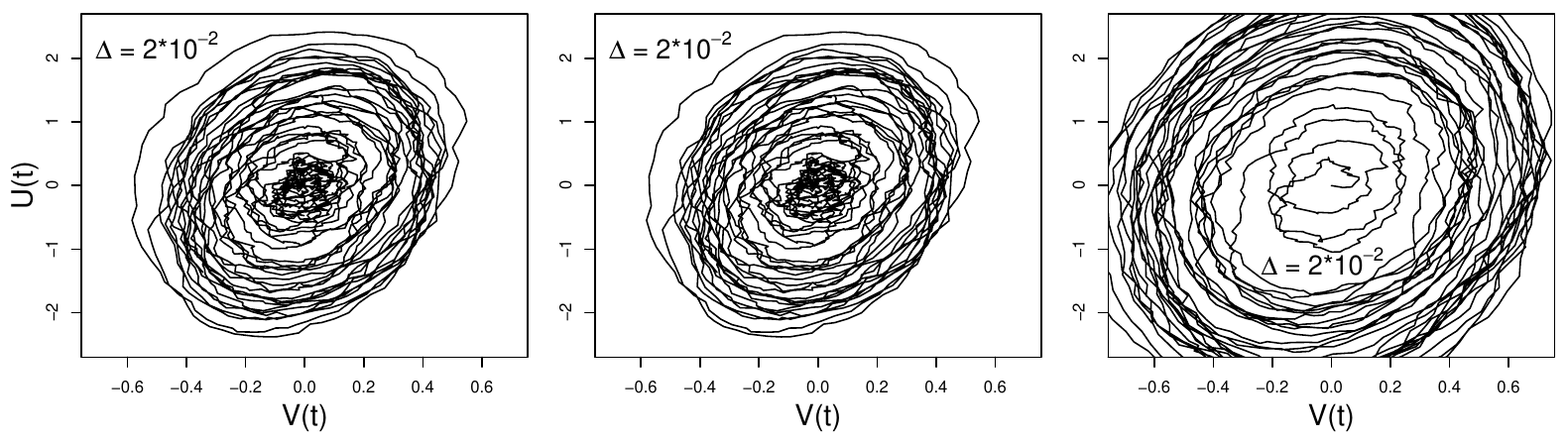}\\
		\includegraphics[width=1.0\textwidth]{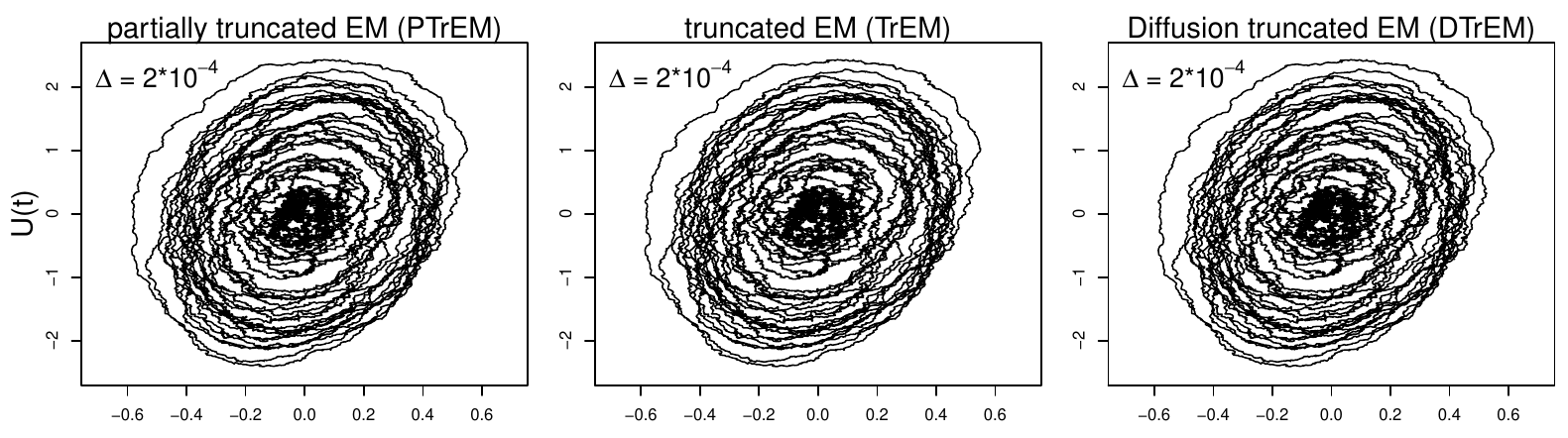}\\
		\includegraphics[width=1.0\textwidth]{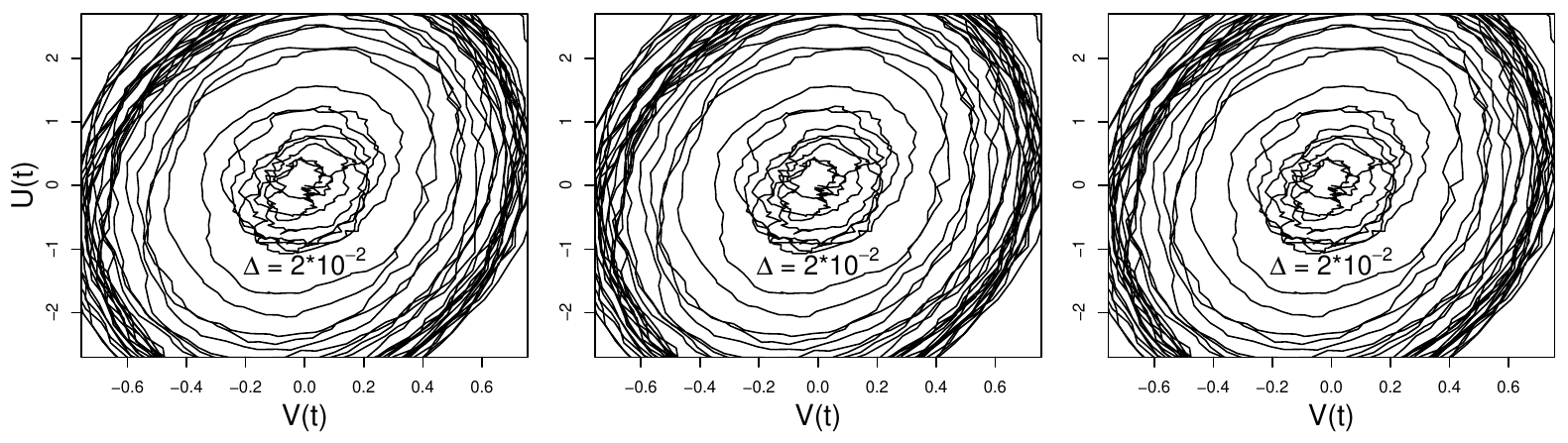}
		\caption{Phase portraits of the FHN model \eqref{FHN} simulated under the considered numerical methods for $X_0=(0,0)^\top$, $\beta=\sigma_1=0.1$, $\sigma_2=0.2$, $\epsilon=1$, $\gamma=20$ and increasing time step~$\Delta$. All paths correspond to the same random realisation.}
		\label{fig:phase}
	\end{centering}
\end{figure}

Moreover, the Euler-Maruyama type methods perform even worse in terms of second moment (amplitude) preservation when the parameter $\epsilon$ is increased, while the splitting methods preserve the qualitative behaviour of the model. This is illustrated in Figure~\ref{fig:phase}, where we increase $\epsilon$ to $1$ (the quantity $\Delta^*$ introduced  for the PTrEM method in Section \ref{sec:7:1} thus equals $1$), fix $\gamma=20$ and report phase portraits of the system obtained under the different numerical methods for $\Delta=2\cdot10^{-4}$ and $\Delta=2\cdot10^{-2}$. Again, the splitting methods preserve the behaviour of the process $(X(t))_{t \in [0,T]}$ as $\Delta$ increases, while the Euler-Maruyama type methods produce larger orbits, overshooting the second moment of the process.

In addition, we investigate the ability of the considered numerical methods to approximate the underlying invariant density of the process $(X(t))_{t \in [0,T]}$. In particular, we estimate the marginal invariant density of the $V$-component of the FHN model with a standard kernel density estimator given by 
\begin{equation}\label{eq:kernel}
	\pi_V(v)=\frac{1}{n\mathfrak{H}}\sum\limits_{i=1}^{n} \mathcal{K}\left( \frac{v-\widetilde{V}(t_i)}{\mathfrak{H}} \right),
\end{equation}
where $\mathfrak{H}$ is a smoothing bandwidth and $\mathcal{K}$ a kernel function \cite{Pons2011}. 
Taking advantage of the ergodicity of the FHN model, the sample $\widetilde{V}(t_i)$, $i=1,\ldots,n$, in \eqref{eq:kernel} is obtained from a long-time simulation of a single path. We use the R-function \texttt{density}, a kernel estimator as described in~\eqref{eq:kernel}.

In Figure \ref{fig:density}, we report the marginal invariant densities of the process $(V(t))_{t \in [0,T]}$ estimated via \eqref{eq:kernel} based on paths generated over the time interval $[0,10^4]$, for $\epsilon=1$, $\gamma=20$ and different values of $\Delta$. Both splitting methods yield reliable estimates for all values of~$\Delta$ under consideration. In contrast, the densities obtained under the Euler-Maruyama type methods already deviate from the desired ones for $\Delta=2\cdot10^{-3}$, and suggest a transition from a unimodal to a bimodal density when $\Delta$ is further increased to $2\cdot 10^{-2}$. It is again visible that the Euler-Maruyama type methods overestimate the second moment, and thus the amplitudes of the process. Similar results are also obtained for the $U$-component. 

\begin{figure}
	\begin{centering}
		\includegraphics[width=1.0\textwidth]{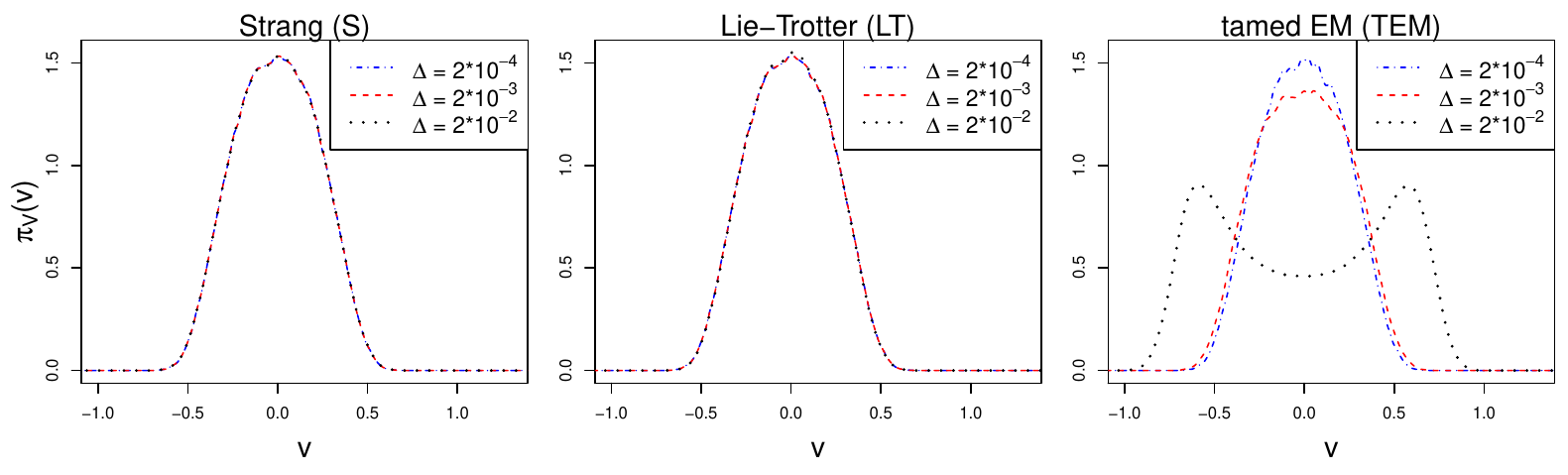}\\
		\includegraphics[width=1.0\textwidth]{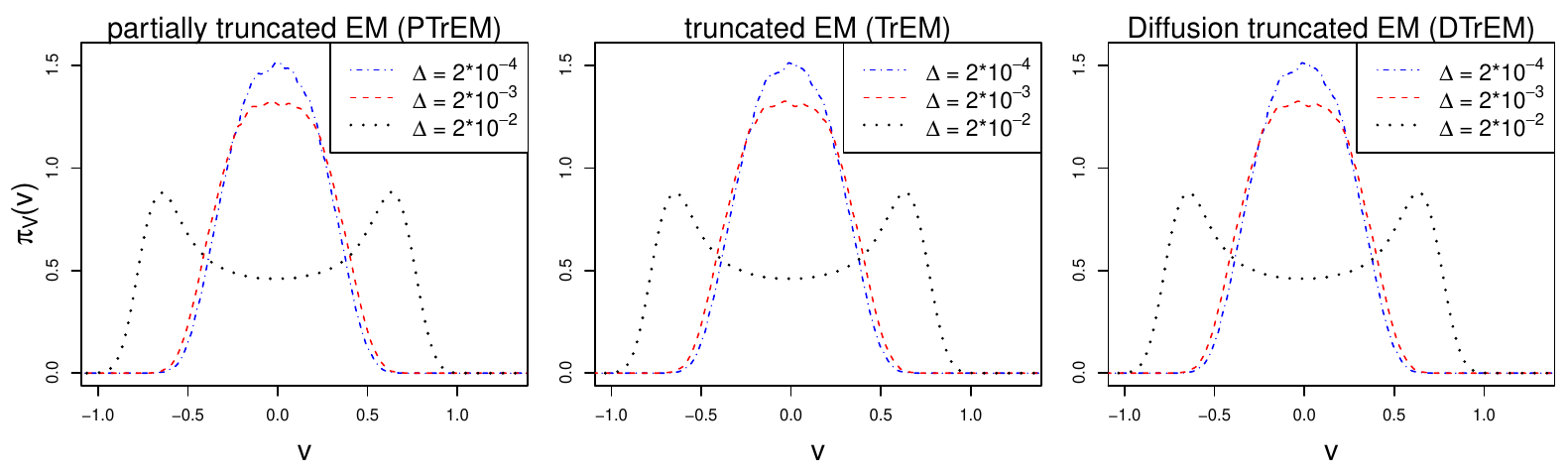}
		\caption{Estimates of the invariant density \eqref{eq:kernel} of the $V$-component of the FHN model \eqref{FHN} obtained under the considered numerical methods for $X_0=(0,0)^\top$, $\beta=\sigma_1=0.1$, $\sigma_2=0.2$, $\epsilon=1$, $\gamma=20$ and increasing time step $\Delta$.}
		\label{fig:density}
	\end{centering}
\end{figure}

\subsection{Impact of the initial condition: preservation of phases}

Finally, we compare the considered numerical methods regarding their sensitivity to changes in the initial condition $X_0$. In particular, we illustrate that, when $V_0$ is large, i.e., when the process starts far away from the mean of the invariant distribution, the considered Euler-Maruyama type methods do not correctly reproduce the phases of the underlying oscillations, even when the time step $\Delta$ is very small. In contrast, the splitting methods are less sensitive to changes in the initial condition. Similar observations are made when $U_0$ is large (figures not shown).

The impact of $V_0$ on the performance of the different numerical methods is shown in Figure \ref{fig:V0} and Figure \ref{fig:U0}, where we report paths of the $V$-component, simulated under $\Delta=2 \cdot 10^{-4}$, $U_0=0$ and different values of $V_0$. The grey reference path is simulated under $\Delta=2\cdot10^{-7}$ using the TEM method \eqref{eq:EM_Tamed}. As before, the results are not influenced by the choice of the numerical method used to generate the reference paths. The underlying parameter values are the same as in Section \ref{sec:7:2_FHN}, choosing $\gamma=5$ and $\epsilon=0.05$ in Figure \ref{fig:V0}, and $\gamma=20$ and $\epsilon=1$ in Figure \ref{fig:U0}. As desired, the splitting methods are barely influenced by $V_0$, even when it is very large, with paths overlapping with the reference paths for all $t$ under consideration. In contrast, when $V_0$ is large, the Euler-Maruyama type methods introduce a delay in when the generated paths reach the oscillatory dynamics, this behaviour deteriorating as $V_0$ increases. Moreover, they also do not preserve the phases of the oscillations, introducing a shift. In Figure \ref{fig:V0}, the DTrEM method reaches the correct oscillatory dynamics, though shifted, almost as fast as the splitting methods for $V_0=10^{4}$, but fails to reach the invariant regime for $V_0=10^3$. In Figure \ref{fig:U0}, it does not enter the invariant regime for both $V_0=10^3$ and $V_0=10^4$. Moreover, spurious oscillations produced by the DTrEM method were obtained for other parameter combinations, as also observed in \cite{Kelly2018,Tretyakov2012}. For $\epsilon=0.05$ (see Figure~\ref{fig:V0}), the PTrEM method does not produce the desired paths, even when $V_0$ is close to the invariant mean. For $\epsilon=1$ (see Figure \ref{fig:U0}), it yields the correct path when $V_0=1$, a path which initially deviates from the others when $V_0=3$, and produces high-amplitude oscillations, not entering the invariant regime, when $V_0=10^3$ and $V_0=10^4$. Therefore, the PTrEM method reacts very sensitively to the choice of $X_0$, this undesired behaviour being also observed for the cubic model problem \eqref{eq:Toy} introduced in Section \ref{sec:5_FHN}, see Appendix \ref{app_cubic}.

\begin{remark}
	Note that the only considered combination of $\gamma$ and $\epsilon$ in this section for which Assumption \ref{assum:A_norm}, and thus Theorem \ref{thm:Lyapunov_discrete} holds is $\gamma=1/\epsilon=20$. However, we do not observe a difference in the quality of the splitting methods depending on the combination of these parameters. Intuitively, this is because the underlying linear SDE \eqref{SDE_FHN} with matrix $A$ as in \eqref{eq:A_N_FHN}, i.e., the damped stochastic oscillator, is geometrically ergodic for all $\gamma>0$ and $\epsilon>0$.
\end{remark}

\begin{figure}
	\begin{centering}
		\includegraphics[width=1.0\textwidth]{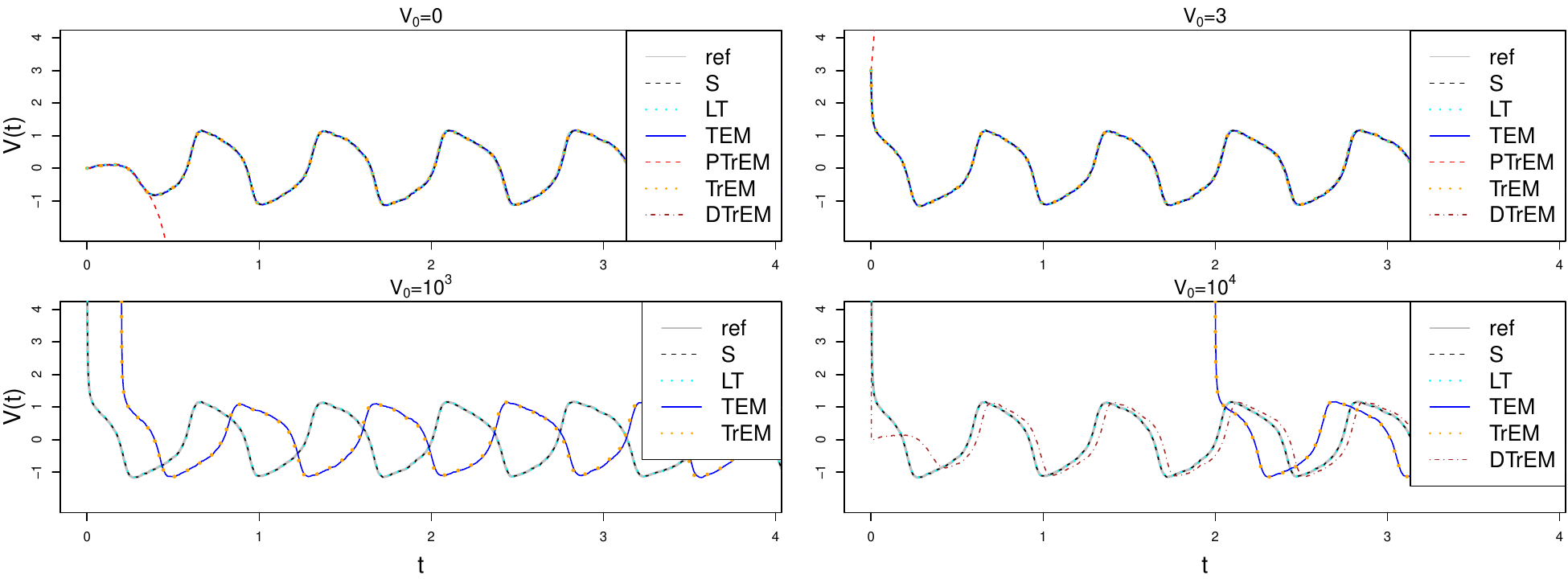}
		\caption{Paths of the $V$-component of the FHN model \eqref{FHN} simulated under the considered numerical methods for different values of $V_0$ ($U_0=0$), $\Delta=2\cdot 10^{-4}$, $\beta=\sigma_1=0.1$, $\sigma_2=0.2$, $\gamma=5$ and $\epsilon=0.05$. The grey reference paths are obtained under $\Delta=2\cdot10^{-7}$ using the TEM method \eqref{eq:EM_Tamed}. All paths correspond to the same random realisation.}
		\label{fig:V0}
	\end{centering}
\end{figure}

\begin{figure}
	\begin{centering}
		\includegraphics[width=1.0\textwidth]{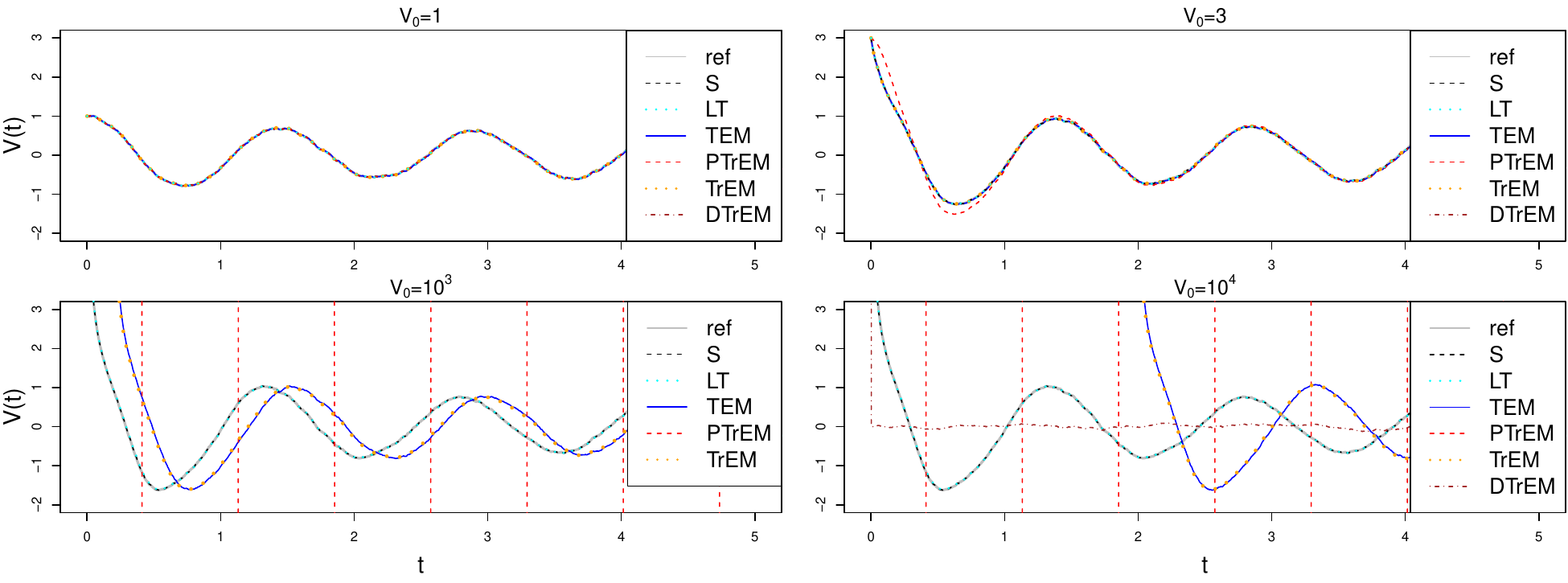}
		\caption{Paths of the $V$-component of the FHN model \eqref{FHN} simulated under the considered numerical methods for different values of $V_0$ ($U_0=0$), $\Delta=2\cdot 10^{-4}$, $\beta=\sigma_1=0.1$, $\sigma_2=0.2$, $\gamma=20$ and $\epsilon=1$. The grey reference paths are obtained under $\Delta=2\cdot10^{-7}$ using the TEM method \eqref{eq:EM_Tamed}. All paths correspond to the same random realisation.}
		\label{fig:U0}
	\end{centering}
\end{figure}

\newpage

\section{Conclusion and discussion}
\label{sec:8_FHN}

We propose a splitting strategy to approximate the solutions of semi-linear SDEs with additive noise and globally one-sided Lipschitz continuous drift coefficients which are allowed to grow polynomially. We prove that the resulting explicit Lie-Trotter splitting method is mean-square convergent of order $1$. In contrast to existing explicit mean-square convergent Euler-Maruyama type methods, which may also achieve a convergence rate of order $1$, the constructed  method preserves important structural properties of the model. 

First, it provides a more accurate approximation of the noise structure of the SDE through the covariance matrix of the exact solution of the stochastic subequation. In particular, while the conditional covariance matrix of Euler-Maruyama type methods only contains the information of the diffusion matrix $\Sigma$, the splitting method also relies on the matrix $A$ in the semi-linear drift. This is particularly beneficial when the SDE is hypoelliptic. Indeed, while the conditional covariance matrix of the existing methods is degenerate in that case, we establish the desired $1$-step hypoellipticity of the constructed splitting method, meaning that it admits a smooth transition density in every iteration step. In particular, the method yields non-degenerate Gaussian transition densities, a feature which is advantageous within likelihood-based estimation techniques, where the existing numerical methods cannot be applied \cite{Ditlevsen2019,Melnykova2018,Pokern2009}. 

Second, Euler-Maruyama type methods do not preserve the geometric ergodicity of the process. As a consequence, they are not robust to changes in the initial condition, yield poor approximations of the underlying invariant distribution, or do not preserve the moments of the process. In contrast, the proposed splitting method is proved to preserve the Lyapunov structure of the SDE, as long as an assumption on the solution $f$ of the deterministic subequation is satisfied and it holds that $\norm{e^{A\Delta}}<1$ for all $\Delta\in(0,\Delta_0]$. If, in addition, the logarithmic norm $\mu(A)<0$, the method is proved to have an asymptotically bounded second moment. In the one-dimensional case, a precise bound of the second moment of the splitting method is derived in closed-form and illustrated on a cubic model problem. We also consider the FHN model, a well known equation used to describe the firing activity of single neurons. The geometric ergodicity of the proposed splitting method applied to this equation is established under a restricted parameter space.

Third, we illustrate on the FHN model that, in contrast to Euler-Maruyama type methods, the proposed splitting method preserves the amplitudes, frequencies and phases of neuronal oscillations, even for large time steps. This may make the method particularly beneficial when used, e.g., to simulate large networks of neurons, or when embedded within simulation-based inference procedures. Besides the Lie-Trotter splitting method, we also consider a method which is based on a Strang composition in our numerical experiments. Both splitting methods perform comparably good throughout, the Strang splitting behaving slightly better in some scenarios. As the considered Euler-Maruyama type methods do converge, their lack of structure preservation becomes less visible when using very small time steps. However, the use of significantly smaller time steps results in drastically higher computational costs, making these methods highly inefficient and, consequently, computationally infeasible within simulation-based inference algorithms, as previously illustrated in~\cite{Buckwar2019}. 

Several generalisations of the considered approach are possible. The proposed splitting strategy can be, e.g., applied to the stochastic Van der Pol oscillator \cite{VanderPol1920,VanderPol1926}, whose investigation leads to similar numerical results. The presented approach may be also applied to SDEs \eqref{eq:semi_linear_SDE} with other types of nonlinearity. In particular, as long as the ODE determined by the function $N$ is exactly solvable (see \cite{Kamke1983} for diverse solution methods) and satisfies some useful conditions, the method presented in this article may be used.
Moreover, the proposed method may be extended to SDEs with multiplicative noise, e.g., to $\Sigma(X(t))=\sigma X(t)$, $\sigma>0$, where the stochastic subequation of the splitting framework corresponds to 
the  geometric Brownian motion. This may be relevant, e.g., for the stochastic Ginzburg-Landau equation arising from the theory of superconductivity \cite{Ginzburg1959,Hutzenthaler2011}.
Furthermore, the investigation of conditions under which the presented results are still valid when the solution of ODE \eqref{ODE_FHN} is not available exactly, constitutes another topic for future~research.

\appendix

\section*{Appendix}

\section{Proof of Proposition \ref{prop:A1_A2_N_toy}}
\label{appA_FHN}

\begin{proof}
	\textbf{Assumption \ref{assum:1}}:
	We start with Assumption \ref{(A1)} and have that
	\begin{equation*}
		\Bigl( N(x)-N(y) \Bigr)(x-y)=(x-y)^2 \Bigl( 1- (x^2+xy+y^2) \Bigr) \leq (x-y)^2.
	\end{equation*}
	Thus, the assumption holds for $c_1=1$, see also Example 1.2.16 in \cite{Humphries2001}.
	
	Now, consider Assumption \ref{(A2)}. We have that
	\begin{eqnarray*}
		\Bigl( N(x)-N(y) \Bigr)^2 
		&\leq& 2(x-y)^2+2(y^3-x^3)^2
		=2(x-y)^2+2(x-y)^2(y^2+xy+x^2)^2 \\ &\leq& 2(x-y)^2+9(x-y)^2(x^4+y^4),
	\end{eqnarray*}
	where we used that $2xy\leq x^2+y^2$ and that ${(3x^2/2+3y^2/2)^2\leq {9}(x^4+y^4)/2}$ in the last inequality. Thus, we obtain that
	\begin{equation*}
		\Bigl( N(x)-N(y) \Bigr)^2 \leq {9}(x-y)^2\Bigl( 1+x^4+{y^4}\Bigr),
	\end{equation*}
	which proves that the assumption holds for $c_2={9}$ and $\chi=3$.
	
	\textbf{Assumption \ref{assum:A_hypo}}: Since $d=1$, this is clear.
	
	\textbf{Assumption \ref{assum:f_additional}}: We prove the statement for $c_3=1/2$. Setting $y=x^2$, it has to be shown that
	\begin{equation*}
		f^2(x;t)=g(y;t):=\frac{y}{e^{-2t}+y(1-e^{-2t})}\leq y+\frac{1}{2}t=:h(y;t), \quad {\forall t \in (0,\Delta_0]}.
	\end{equation*}
	Since $g(y;0)=h(y;0)=y$, it suffices to prove that for any $y\in\mathbb{R}_0^+$ it holds that
	\begin{equation*}
		g'(y;t)=-\frac{2e^{2t}(y-1)y}{\left( 1+(e^{2t}-1)y \right)^2} \leq \frac{1}{2}=h'(y;t), \quad {\forall t \in (0,\Delta_0]},
	\end{equation*}
	where $'$ denotes the derivative with respect to $t$. Consider two cases. First, let $y\notin (0,1)$. Then it holds that $g'(y;t)\leq 0$ for all $t\geq 0$. Second, let $y\in (0,1)$. To prove that $g'(y;t)\leq 1/2$, we determine the global maximum of $g'(y;t)$ with respect to $t$. Solving $g''(y;t)=0$ with respect to $t$, gives that 
	\begin{equation*}
		t_{\textrm{max}}=\frac{1}{2}\log\left(\frac{1}{y}-1\right).
	\end{equation*}
	Noting that $t_{\textrm{max}}$ exists and that $g'(y;t_{\textrm{max}})=1/2$ for any $y\in(0,1)$ proves the result.
	
	\begin{remark}
		If $y\in (0,1)$, it also holds that
		\begin{equation*}
			g'(y;t)\leq 2 e^{2t}y(1-y)\leq \frac{1}{2}e^{2t} \leq \frac{e^{2\Delta_0}}{2}.
		\end{equation*}
		Thus, a simpler argument suffices to prove the statement for $c_3(\Delta_0)=\frac{e^{2\Delta_0}}{2}>\frac{1}{2}$.
	\end{remark}
	
	\textbf{Assumptions \ref{assum:A_norm} and \ref{assum:A_norm_log}}: These statements are satisfied because $A=-1$, and thus the matrix norm $\norm{e^{A\Delta}}=e^{-\Delta}<1$ and the logarithmic norm $\mu(A)=-1<0$.	
\end{proof}

\newpage

\section{Proof of Proposition \ref{prop:A1_A2_N_FHN}}
\label{appC_FHN}

\begin{proof}
	\textbf{Assumption \ref{assum:1}}:
	Denote $x=(v_1,u_1)^\top$ and $y=(v_2,u_2)^\top$. We start with Assumption \ref{(A1)} and have that
	\begin{equation*}
		(N(x)-N(y),x-y)=\frac{1}{\epsilon}(v_1-v_2)^2\left(1-(v_1^2+v_1v_2+v_2^2)\right) 
		\leq\frac{1}{\epsilon}(v_1-v_2)^2 \leq \frac{1}{\epsilon}\norm{x-y}^2. 
	\end{equation*}
	Thus, the assumption holds for $c_1={1}/{\epsilon}$.
	
	Now, consider Assumption \ref{(A2)}. Applying similar arguments as in Appendix \ref{appA_FHN}, we have that
	\begin{equation*}
		\norm{N(x)-N(y)}^2=\left( \frac{1}{\epsilon}(v_1-v_2)+\frac{1}{\epsilon}(v_2^3-v_1^3) \right)^2  \leq \frac{2}{\epsilon^2}(v_1-v_2)^2+\frac{9}{\epsilon^2}(v_1-v_2)^2(v_1^4+v_2^4).
	\end{equation*} 
	Using that $(v_1-v_2)^2\leq\norm{x-y}^2$ and that $v_1^4+v_2^4\leq\norm{x}^4+\norm{y}^4$, we finally obtain that
	\begin{eqnarray*}
		\norm{N(x)-N(y)}^2&\leq&\frac{9}{\epsilon^2}\norm{x-y}^2\left(1+\norm{x}^4+\norm{y}^4 \right).
	\end{eqnarray*}
	Thus, the assumption holds for $c_2=9/\epsilon^2$ and $\chi=3$.
	
	\textbf{Assumption \ref{assum:A_hypo}}: Condition \eqref{eq:cond_hypo} holds for the linear SDE \eqref{SDE_FHN}, since
	\begin{equation*}
		\partial_u(Ax)_1\sigma_2=-\frac{\sigma_2}{\epsilon}\neq 0.
	\end{equation*}
	Thus, the equation is hypoelliptic.
	
	\textbf{Assumption \ref{assum:f_additional}}: 	We have that
	\begin{equation*}
		f(x;\Delta)=(f_1(v;\Delta),f_2(u;\Delta))^\top.
	\end{equation*}
	Consider the $V$-component. The fact that, for any $v\in\mathbb{R}$ it holds that 
	\begin{equation*}
		f_1^2(v;\Delta)\leq v^2+\frac{1}{2\epsilon} \Delta \quad \forall \ \Delta \geq 0,
	\end{equation*}
	can be proved in the same way as in Appendix \ref{appA_FHN}. 
	Regarding the $U$-component, by assumption, we have that
	\begin{equation*}
		f_2^2(u;\Delta)=u^2.
	\end{equation*}
	Thus, 
	\begin{equation*}
		\norm{f(x;\Delta)}^2=f_1^2(v;\Delta)+f_2^2(v;\Delta)\leq v^2+u^2+\frac{1}{2\epsilon}\Delta =\norm{x}^2+\frac{1}{2\epsilon}\Delta,
	\end{equation*}
	which proves the statement for $c_3=1/(2\epsilon)$.
	
	\textbf{Assumption \ref{assum:A_norm}}: Recall that 
	\begin{equation*}
		\norm{e^{A\Delta}}=\sqrt{\lambda_{\textrm{max}}\left( (e^{A\Delta})^\top (e^{A\Delta}) \right)},
	\end{equation*}
	and define $B:=(e^{A\Delta})^\top (e^{A\Delta})$.
	It suffices to prove that $\lambda_{\textrm{max}}\left( B \right)<1$ for all $\Delta \in (0,\Delta_0]$.
	
	Since by assumption $\gamma=1/\epsilon$, $\kappa$ defined in \eqref{kappa} becomes $\kappa=4\gamma^2-1$. When $\kappa=0$, this condition is equivalent to $\gamma=1/2$. In this case, the eigenvalues of $B$ are given by
	\begin{eqnarray*}
		\lambda_1(\Delta)=\frac{1}{2}e^{-\Delta}\left( 2+\Delta^2 -\sqrt{\Delta^2(4+\Delta^2)} \right) \leq 
		\lambda_2(\Delta) =\frac{1}{2}e^{-\Delta}\left( 2+\Delta^2 +\sqrt{\Delta^2(4+\Delta^2)} \right).
	\end{eqnarray*}
	It holds that $\lambda_2'(\Delta)<0$ for all $\Delta>0$, where $'$ denotes the derivative with respect to $\Delta$. Thus, $\lambda_2(\Delta)$ is strictly decreasing in $\Delta$. Noting that $\lambda_2(0)=1$ implies the statement.
	
	When $\kappa<0$, $\gamma<1/2$. In this case, the eigenvalues of $B$ are given by
	\begin{eqnarray*}
		\lambda_1(\Delta,\gamma)&=& \frac{e^{-\Delta}}{\kappa}\left( 4\gamma^2-\cosh(\sqrt{-\kappa}\Delta)-\sqrt{2 \left[1-8\gamma^2+\cosh(\sqrt{-\kappa}\Delta)\right] \sinh^2(\sqrt{-\kappa}\Delta/2) } \right),  \\
		\lambda_2(\Delta,\gamma)&=&\frac{e^{-\Delta}}{\kappa}\left( 4\gamma^2-\cosh(\sqrt{-\kappa}\Delta)+\sqrt{ 2\left[1-8\gamma^2+\cosh(\sqrt{-\kappa}\Delta)\right] \sinh^2(\sqrt{-\kappa}\Delta)/2 } \right),
	\end{eqnarray*}
	where $\lambda_1(\Delta,\gamma)\leq \lambda_2(\Delta,\gamma)$ for all $\Delta>0$ and $\gamma<1/2$. For $\gamma<1/2$ arbitrary, but fixed, the partial derivative of $\lambda_2(\Delta,\gamma)$ with respect to $\Delta$ exists and satisfies
	\begin{equation*}
		\frac{\partial}{\partial \Delta} \lambda_2(\Delta,\gamma)<0, \quad  \forall \ \Delta \in (0,\Delta_0].
	\end{equation*}
	Thus, the function $\lambda_2(\Delta,\gamma)$ is strictly decreasing in $\Delta$. Moreover, we have that $\lambda_2(0,\gamma)=1$ for any $\gamma$, which implies the statement.
	
	When $\kappa>0$, $\gamma>1/2$. In this case, the eigenvalues of $B$ are given by
	\begin{eqnarray*}
		\lambda_1(\Delta,\gamma)&=& \frac{e^{-\Delta}}{\kappa}\left( 4\gamma^2-\cos(\sqrt{\kappa}\Delta)-\sqrt{2 \left[-1+8\gamma^2-\cos(\sqrt{\kappa}\Delta)\right] \sin^2(\sqrt{\kappa}\Delta/2) } \right),  \\
		\lambda_2(\Delta,\gamma)&=&\frac{e^{-\Delta}}{\kappa}\left( 4\gamma^2-\cos(\sqrt{\kappa}\Delta)+\sqrt{ -1+8\gamma^2-8\gamma^2\cos(\sqrt{\kappa}\Delta) +\cos^2(\sqrt{\kappa}\Delta) } \right).
	\end{eqnarray*}
	Again, we observe that $\lambda_1(\Delta,\gamma)\leq 	\lambda_2(\Delta,\gamma)$ for all $\Delta>0$ and $\gamma>1/2$. Consider $\gamma>1/2$ arbitrary, but fixed. Moreover, define $I_\Delta:=\{ 2\pi l/\sqrt{\kappa}, \ l \in \mathbb{N} \}$. Since $\cos(2\pi l)=1$ and $\sin(\pi l)=0$, we have that
	\begin{equation*}
		\lambda_1(\gamma,\Delta)=\lambda_2(\gamma,\Delta)=e^{-\Delta}<1, \quad \forall \ \Delta \in I_\Delta.
	\end{equation*}
	Let now $\Delta \in (0,\infty)\backslash I_\Delta$. For those values of $\Delta$, the partial derivative of $\lambda_2(\Delta,\gamma)$ with respect to $\Delta$ exists. In particular, we have that 
	\begin{equation*}
		\frac{\partial}{\partial \Delta} \lambda_2(\Delta,\gamma)<0, \quad  \forall \ \Delta \in (0,\infty)\backslash I_\Delta.
	\end{equation*}
	Thus, for a fixed $\gamma$, the function $\lambda_2(\Delta,\gamma)$ is strictly decreasing in $\Delta$. Noting that
	$\lambda_2(0,\gamma)=1$ for any $\gamma$ implies the statement.	
\end{proof}

\section{Numerical experiments for the cubic model problem}
\label{app_cubic}

Consider the cubic model problem \eqref{eq:Toy} introduced in Section \ref{sec:5_FHN}.
We now illustrate how the choice of $X_0$ influences the behaviour of paths of the ergodic process $X(t)$  simulated under the different numerical methods. If $X_0$ is large compared to the invariant mean, the standard Euler-Maruyama method \eqref{EM_FHN} produces paths which are computationally pushed to $+/-$ infinity within a few iteration steps, even for very small values of $\Delta$. This is not the case for the tamed/truncated variants of this method. However, they may also react sensitively to $X_0$, even for small $\Delta$. This is illustrated in Figure~\ref{fig:paths_toy}, where we report paths of SDE \eqref{eq:Toy} generated for different values of $X_0$, using $\Delta=10^{-4}$ and $\sigma=1/2$.
The grey reference paths are simulated under $\Delta=10^{-7}$ using the TEM method~\eqref{eq:EM_Tamed}. The choice of the reference method does not change the reported results, and all paths are generated under the same underlying pseudo random numbers. Note that, the DTEM method \eqref{DTEM} is not reported in Figure~\ref{fig:paths_toy}, because it shows a performance comparable to that of the TEM method~\eqref{eq:EM_Tamed}. As desired, the paths obtained under the splitting methods \eqref{SP1_FHN} and \eqref{SP3_FHN} are not deterred by large values of $X_0$, and overlap with the reference path for all values of $X_0$ under consideration. In contrast, for large values of $X_0$, the Euler-Maruyama type methods introduce a delay in when the respective paths reach the reference path. This behaviour deteriorates as $X_0$ increases. The path obtained under the PTrEM method \eqref{PTrEM} initially deviates from the desired one, even when $X_0=5$, not reaching the reference path for the values of $t$ under consideration for $X_0=10^4$ and $X_0=3\cdot 10^4$. Note also that, for some values of $X_0$, we observe that the DTrEM method~\eqref{DTrEM} may produce spurious oscillations (figures not shown). See \cite{Kelly2018,Tretyakov2012}, where such a behaviour has also been observed.

\begin{figure}[H]
	\begin{centering}
		\includegraphics[width=1.0\textwidth]{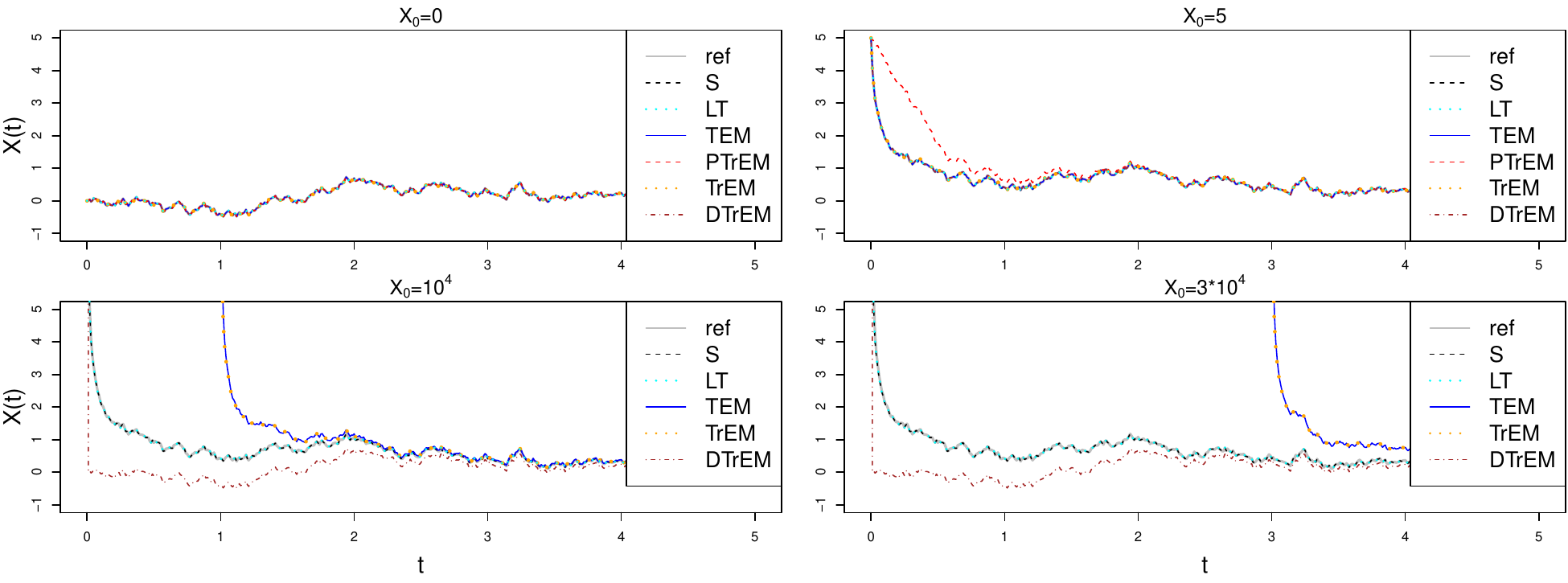}	
		\caption{Paths of SDE \eqref{eq:Toy} simulated under the considered numerical methods for different values of $X_0$, $\Delta=10^{-4}$ and $\sigma=1/2$. The grey reference paths are obtained under $\Delta=10^{-7}$ using the TEM method \eqref{eq:EM_Tamed}. All paths correspond to the same random realisation.}
		\label{fig:paths_toy}
	\end{centering}
\end{figure}

\begin{figure}[H]
	\begin{centering}
		\includegraphics[width=1.0\textwidth]{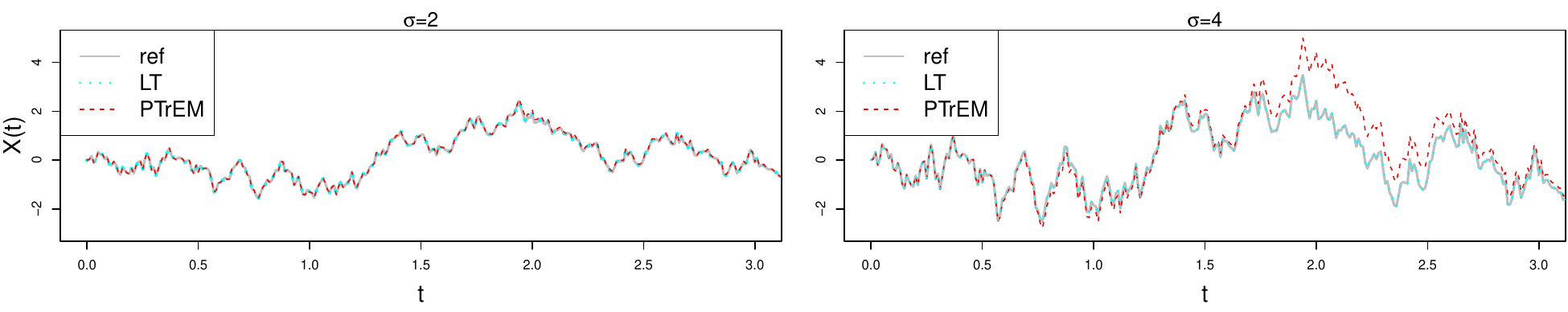}	
		\caption{Paths of SDE \eqref{eq:Toy} simulated under the LT and PTrEM methods for $X_0=0$, $\Delta=10^{-4}$ and different values of $\sigma$. The grey reference paths are obtained under $\Delta=10^{-7}$ using the TEM method \eqref{eq:EM_Tamed}. All paths correspond to the same random realisation.}
		\label{fig:paths_toy_sig}
	\end{centering}
\end{figure}	

In addition, we observe that the PTrEM method \eqref{PTrEM} may also produce paths which deviate from the desired ones for larger values of the noise parameter $\sigma$. This is illustrated in Figure~\ref{fig:paths_toy_sig}, where we report paths of SDE \eqref{eq:Toy} generated under $X_0=0$, $\Delta=10^{-4}$ and different values of $\sigma$. While the paths obtained under the LT splitting \eqref{SP1_FHN} (the same is observed for all other schemes except for the PTrEM method) overlap with the reference paths for both values of $\sigma$ under consideration, the PTrEM method produces  a path which deviates from the desired one when $\sigma=4$ (right panel). This behaviour deteriorates as $\sigma$ increases.



\newpage

\end{document}